\documentclass[11pt,bezier]{article}
\usepackage{times}
\usepackage{booktabs}
\usepackage{pifont}
\usepackage{floatrow}
\usepackage{caption}
\usepackage{mathrsfs}
\usepackage{amsmath}
\usepackage{amsfonts,amsthm,amssymb,mathrsfs,bbding}
\usepackage{txfonts}
\usepackage{graphics,multicol}
\usepackage{graphicx}
\usepackage{color}
\usepackage{longtable}

\usepackage[colorlinks,
            linkcolor=black,
            anchorcolor=black,
            citecolor=black]{hyperref}
\usepackage{url}
\usepackage{caption}
\usepackage{cite}
\usepackage{latexsym,bm}
\usepackage{indentfirst}
\usepackage{color}
\usepackage[T1]{fontenc}
\usepackage[utf8]{inputenc}
\usepackage{authblk}
\evensidemargin=\oddsidemargin\topmargin=-1.5cm

\newtheorem{theorem}{Theorem}[section]

\newtheorem{lemma}{Lemma}[section]
\newtheorem{cor}{Corollary}[section]

\theoremstyle{definition}
\newtheorem{remark}{Remark}

\newtheorem{defi}{Definition}[section]

\theoremstyle{definition}

\addtocounter{section}{0}

\newtheoremstyle{nonumber}%
  {3pt}{3pt}%
  {\itshape}{}%
  {\bfseries}{.}
  { }%
  {\thmname{#1}\thmnote{ #3}}
\theoremstyle{nonumber}
\newtheorem*{theorem*}{Theorem}

\begin{document}

\title{\bf\Large Edge-Connectivity versus Lin--Lu--Yau Curvature\thanks{Supported by NSFC (Nos. 12131013, 12471334).}}

\date{}

\author{Ziming Zhou$^{1,2}$, Shenggui Zhang$^{1,2,}$\thanks{Corresponding author. E-mail addresses: zhouziming@mail.nwpu.edu.cn(Z. Zhou), sgzhang@nwpu.edu.cn (S. Zhang).}\\
\small $^{1}$School of Mathematics and Statistics, \\
\small Northwestern Polytechnical University, Xi'an, Shaanxi 710129, China\\
\small $^{2}$Xi'an-Budapest Joint Research Center for Combinatorics, \\
\small Northwestern Polytechnical University, Xi'an, Shaanxi 710129, China\\}

\maketitle

\begin{abstract}
We systematically explore the relationship between edge-connectivity and Lin--Lu--Yau curvature for graphs. 
The intuition is that a connected graph with large Lin--Lu--Yau curvature should also have large edge-connectivity, and vice versa under suitable conditions. 
We prove that the edge-connectivity of a connected graph is bounded lower by the product of its minimum degree, its Lin--Lu--Yau curvature and the constant $\frac{3}{2}$. 
We also provide two lower bounds on the edge-connectivity of a graph that guarantee positive Lin--Lu--Yau curvature, 
and discuss the sharpness of these bounds.
  
\medskip

\begin{flushleft}
\textbf{Keywords:}    Graph curvature; Lin--Lu--Yau curvature; Edge-connectivity; Optimal transportation
\end{flushleft}

\end{abstract}

\section{Introduction}

A graph is \textit{connected} if, for every partition of its vertex set into two nonempty sets $X$ and $Y$, there is an edge with one end in $X$ and one end in $Y$; otherwise it is \textit{disconnected}.
Connectivity is one of the fundamental concepts of graph theory.
There are two parameters of a graph, its connectivity and edge-connectivity, which measure the extent to which it is connected\cite{Bondy2008, Diestel2017}.

The \textit{connectivity} $\kappa(G)$ of a non-complete graph $G$ is defined as the minimum number of vertices whose removal disconnects the graph.
The \textit{edge-connectivity} $\kappa'(G)$ of a graph $G$ with at least two vertices is defined as the minimum number of edges whose removal disconnects the graph.
By convention, the connectivity of a complete graph $K_n$ with $n$ vertices is defined to be $n-1$, and the edge-connectivity of a graph with only a single vertex is defined to be $0$. 

A classical bound on connectivity due to Whitney \cite{Whitney1932} reads as
$$
\kappa(G) \leq \kappa'(G) \leq \delta(G),
$$
where $\delta(G)$ denotes the minimum vertex degree of $G$. 
Both inequalities are sharp. For instance, the identity $\kappa(G)=\kappa'(G)=\delta(G)$ holds for any finite connected edge-transitive graph \cite{Mader1971, Watkins1970}, for any finite distance-regular graph \cite{BrouwerHaemers2005, BrouwerKoolen2009, BrouwerMenser1985}, and for any finite $(0,2)$-graph \cite{BrouwerMulder1997}. 
In general, these three quantities $\kappa(G)$, $\kappa'(G)$, and $\delta(G)$ can differ by arbitrarily large amounts. Indeed, for any integers $0 < \kappa \leq \kappa' \leq \delta$, there exists a graph $G$ with $\kappa(G)=\kappa$, $\kappa'(G)=\kappa'$, and $\delta(G)=\delta$.

For over a century, Ricci curvature has been a fundamental concept in the study of smooth Riemannian manifolds, governing volume growth and geodesic behavior. 
Because of its importance, there has been considerable interest in generalizing Ricci curvature to discrete structures in recent decades. 
In 2009, Ollivier \cite{Ollivier} introduced a notion of coarse Ricci curvature for Markov chains on metric spaces via optimal transport. This notion is particularly well suited for locally finite graphs. Lin, Lu, and Yau \cite{LLY} modified Ollivier's definition to introduce the Lin--Lu--Yau curvature on graphs in 2011. This refinement is obtained by differentiating the Ollivier-Ricci curvature with respect to the idleness parameter. The Lin--Lu--Yau curvature detects local structural information of a graph and has been widely studied in recent years. 

The Lin--Lu--Yau curvature is defined for any two distinct vertices $x,y$ of a graph $G$. When $x$ and $y$ are adjacent, we call $\kappa_{\mathrm{LLY}}(x,y)$ the Lin--Lu--Yau curvature of the edge $xy$.
The precise definition is formulated via the Wasserstein distance between probability measures; see Subsection \ref{Lin--Lu--Yau curvature}. In Figure \ref{example}, the number on each edge is the Lin--Lu--Yau curvature of that edge, computed by the graph curvature calculator\cite{Cushing2022}.

\begin{figure}[htbp!]
\centering
\includegraphics[width=14cm,page={1}]{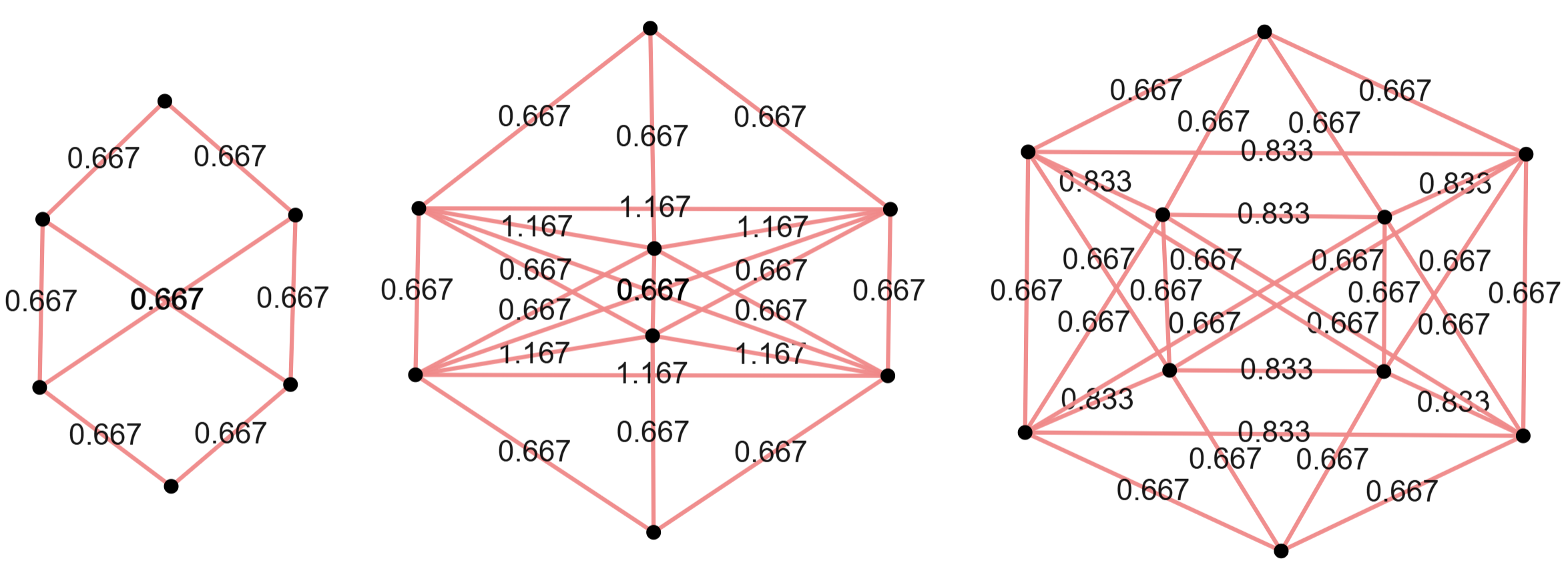}
\caption{Three graphs labeled with the Lin--Lu--Yau curvature of each edge. The figure is produced by the interactive app “Graph curvature calculator” at https://www.mas.ncl.ac.uk/graph-curvature/.}
\label{example}
\end{figure}

For two vertices $x, y$ in $G$, we denote by $d(x, y)$ the distance between $x$ and $y$. For any positive integer $i$, let us denote the lower Lin--Lu--Yau curvature bound at scale $i$ by
$$
\kappa_{\mathrm{LLY}}^{(i)}(G) := \inf_{\substack{x, y \in V \\ d(x, y) = i}} \kappa_{\mathrm{LLY}}(x, y).
$$
When $i = 1$, we simply write $\kappa_{\mathrm{LLY}}(G)$ instead of $\kappa_{\mathrm{LLY}}^{(1)}(G)$.

Chen, Liu and You\cite{Liu} showed that positive Lin--Lu--Yau curvature implies large connectivity. 
They proved that $\kappa(G) \geq \delta(G)  \kappa_{\mathrm{LLY}}^{(2)}(G)$ holds for every non-complete connected graph $G$ with minimum degree $\delta(G)$ and connectivity $\kappa(G)$.
They further characterized the structure of an infinite graph family for which equality is attained in the inequality. 
Actually, $$\kappa'(G) \geq \delta(G)  \kappa_{\mathrm{LLY}}^{(2)}(G)$$ holds for every non-complete connected graph $G$ with diameter $\mathrm{diam}(G)=2$ 
since $\kappa'(G)\geq\kappa(G)$.

For any graph $G$ obtained by deleting a non-empty matching from a complete graph $K_n$ with $n \geq 3$ vertices, the equality $\kappa'(G) = \delta(G)  \kappa_{\mathrm{LLY}}(G)$ holds, and hence equality $\kappa'(G) = \delta(G)  \kappa_{\mathrm{LLY}}^{(2)}(G)$ also holds since $\kappa_{\mathrm{LLY}}^{(2)}(G)  \geq \kappa_{\mathrm{LLY}}(G)$ by the triangle inequality of the Wasserstein distance; see \cite[Lemma 2.3]{LLY} or Lemma \ref{triangle inequality} below.
Indeed, it was proved independently in \cite[Theorem 1.3]{Cushing} and \cite[Theorem 1]{Huang} that $\kappa_{\mathrm{LLY}}(G) = 1$ for such a graph. Therefore, $\kappa'(G) \geq \delta(G)$, which forces equality.
Such a graph satisfies $\mathrm{diam}(G)=2$, whereas for graphs with $\mathrm{diam}(G)\geq3$, a better bound on edge-connectivity is available.
Our first result shows that positive Lin--Lu--Yau curvature implies large edge-connectivity.


\begin{theorem}\label{diam3}
Let $G$ be a non-complete connected graph with diameter $\mathrm{diam}(G)\geq3$, minimum degree $\delta(G)$ and edge-connectivity $\kappa'(G)$. Then,
$$
\kappa'(G) \geq \frac{3}{2}\delta(G)  \kappa_{\mathrm{LLY}}^{(3)}(G).
$$
\end{theorem}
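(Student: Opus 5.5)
If $\kappa_{\mathrm{LLY}}^{(3)}(G)\le 0$ the inequality is trivial, so assume $\kappa_{\mathrm{LLY}}^{(3)}(G)>0$. I would split into two cases: $\kappa'(G)\ge\delta(G)$ and $\kappa'(G)<\delta(G)$. In both cases the tool is the dual (Kantorovich / Münch--Wojciechowski) description of the Lin--Lu--Yau curvature. For $d(x,y)=d$ and any $1$-Lipschitz $f$ with $f(y)-f(x)=d$, this gives
$$
d\,\kappa_{\mathrm{LLY}}(x,y)\le \Delta f(x)-\Delta f(y),\qquad \Delta f(v)=\frac{1}{\deg v}\sum_{u\sim v}\bigl(f(u)-f(v)\bigr).
$$
This is just $W(\mu_x,\mu_y)\ge \int f\,d\mu_y-\int f\,d\mu_x$ expanded in the idleness parameter. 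I would check the sign conventions here carefully against the definition in Subsection \ref{Lin--Lu--Yau curvature}.

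\emph{Case $\kappa'(G)\ge\delta(G)$.} Here it suffices to show $\kappa_{\mathrm{LLY}}^{(3)}(G)\le \frac23$. Since $\mathrm{diam}(G)\ge 3$, there exist $x,y$ with $d(x,y)=3$. Take $f(v)=\min\{d(x,v),3\}$. Every neighbour of $x$ gains $1$, so $\Delta f(x)=1$. Every neighbour of $y$ has $f\ge 2$, so $\Delta f(y)\ge -1$. Hence $3\kappa_{\mathrm{LLY}}(x,y)\le 2$. (Equivalently, this is the standard estimate $W(\mu_x,\mu_y)\ge d-2$.) Therefore $\frac32\delta(G)\kappa_{\mathrm{LLY}}^{(3)}(G)\le\delta(G)\le\kappa'(G)$.

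\emph{Case $\kappa'(G)<\delta(G)$.} Let $F$ be a minimum edge cut, $|F|=\kappa'(G)$, separating $V$ into $X$ and $Y$. Let $A\subseteq X$ and $B\subseteq Y$ be the endpoints of $F$ on each side.

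First I would show $X\setminus A\neq\emptyset$ (and symmetrically $Y\setminus B\ne\emptyset$). If $X=A$, then $|X|\le|F|<\delta$. Summing degrees over $X$ gives $\delta|X|\le |X|(|X|-1)+|F|$, which contradicts $|X|\le |F|<\delta$. By connectivity of $G$ we can then pick an edge $ab\in F$ and vertices $x\in X\setminus A$ with $x\sim a$, and $y\in Y\setminus B$ with $y\sim b$. The step I expect to need most care is making these choices simultaneously. I would first try this: take a shortest path from $X\setminus A$ to $Y\setminus B$. Its interior must lie in $A\cup B$, and any vertex of $A$ (resp. $B$) on it could be replaced by the path's own endpoints, which pins the path down to the form $x,a,b,y$.

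Now define the four-level function $f=0$ on $X\setminus A$, $f=1$ on $A$, $f=2$ on $B$, $f=3$ on $Y\setminus B$. It is $1$-Lipschitz because the only edges between $X$ and $Y$ are those of $F$, which join $A$ to $B$. Since $f(y)-f(x)=3$, this forces $d(x,y)=3$. We have $\Delta f(x)=|N(x)\cap A|/\deg x\le |F|/\delta$ and $\Delta f(y)=-|N(y)\cap B|/\deg y\ge -|F|/\delta$. Hence
$$
3\kappa_{\mathrm{LLY}}^{(3)}(G)\le 3\kappa_{\mathrm{LLY}}(x,y)\le \frac{2\kappa'(G)}{\delta(G)},
$$
which is exactly the claimed inequality.
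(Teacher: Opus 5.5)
\textbf{Gap: the cut edge with interior neighbours on both sides.} Your route differs from the paper's. The paper splits on whether $X_2$ or $Y_2$ is empty and gets the configuration $x\sim u$, $uw\in S$, $w\sim y$ from Lemma~\ref{neighbor}. You split on $\kappa'(G)=\delta(G)$ versus $\kappa'(G)<\delta(G)$, and dispose of the first case with the universal estimate $\kappa_{\mathrm{LLY}}(x,y)\le\frac23$ for $d(x,y)=3$. Your Kantorovich inequality, both test functions and the final estimates are correct. The gap is exactly the step you flagged.

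\emph{By connectivity} only gives some $a\in A$ with a neighbour in $X\setminus A$ and some $b'\in B$ with a neighbour in $Y\setminus B$, not the two ends of one edge of $F$. A shortest $(X\setminus A)$--$(Y\setminus B)$ path is only forced to have the shape $x,a,\dots,b,y$ with interior in $A\cup B$. Nothing lets you ``replace'' interior vertices, since that needs adjacencies you do not have.

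Concretely, take two disjoint cliques $C,D$ on at least three vertices and a $4$-cycle $u_1u_2w_1w_2$. Join $u_1$ to two vertices of $C$ and $w_1$ to two vertices of $D$. Then $F=\{u_1w_2,u_2w_1\}$ is a minimum edge cut, and $X\setminus A=C$ and $Y\setminus B=D$ are nonempty. Yet $d(C,D)=4$, so no path $x,a,b,y$ exists and your four-level function produces no pair at distance $3$. Here $\kappa'=\delta=2$. So the hypothesis $\kappa'(G)<\delta(G)$ must be used at this step, and your path argument never uses it.

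\textbf{Fix.} A short count does it. Suppose some $a\in A$ has no neighbour in $X\setminus A$. Then every neighbour $a'$ of $a$ in $X$ lies in $A$ and is the $X$-end of some edge of $F$. These edges are pairwise distinct (different $X$-ends) and distinct from the edges of $F$ at $a$. Hence
\[
\kappa'(G)=|F|\ge |S_1(a)\cap X|+|S_1(a)\cap Y|=d_a\ge\delta(G),
\]
a contradiction. So when $\kappa'<\delta$, for every $ab\in F$ the vertex $a$ has a neighbour $x\in X\setminus A$ and $b$ has a neighbour $y\in Y\setminus B$. This also gives $X\setminus A\neq\emptyset$, so your degree-sum step becomes redundant. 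The path $x,a,b,y$ together with your $f$ forces $d(x,y)=3$, and the rest of your argument goes through.

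\textbf{Comparison with the paper.} The same example shows that Lemma~\ref{neighbor} also needs this hypothesis: there $u_2\in X_1$ has no neighbour in $X_2=C$. Moreover, when one side has no interior vertex (the paper's Case~1), the count above forces $\kappa'=\delta$. The paper's Case~1 argument ends with a bound in terms of $\kappa^{(2)}_{\mathrm{LLY}}(G)$ rather than $\kappa^{(3)}_{\mathrm{LLY}}(G)$. Your dichotomy, with the $\frac23$ estimate covering $\kappa'=\delta$, is what makes the stated inequality close cleanly once the count is added.
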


Recall that $\kappa_{\mathrm{LLY}}^{(3)}(G)  \geq \kappa_{\mathrm{LLY}}(G)$ by the triangle inequality for the Wasserstein distance. Thus, the corollary below follows immediately.

\begin{cor}
Let $G$ be a non-complete connected graph with minimum degree $\delta(G)$ and edge-connectivity $\kappa'(G)$. Then,
$$\kappa'(G) \geq \frac{3}{2}\delta(G)  \kappa_{\mathrm{LLY}}(G).$$
\end{cor}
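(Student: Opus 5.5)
Our plan is to take a minimum edge cut and find a pair of vertices at distance exactly $3$ straddling it, then bound the Wasserstein distance between their uniform neighbourhood measures from below in terms of $\kappa'(G)$. We use the standard limit-free formula for the Lin--Lu--Yau curvature: for $d(x,y)=i$,
$$
\kappa_{\mathrm{LLY}}(x,y)\le \frac{1}{i}\bigl(i - W(\mu_x,\mu_y)\bigr)+\text{(correction)},
$$
where $\mu_x$ is uniform on $N(x)$. More robustly, we will work with Kantorovich duality: any $1$-Lipschitz $f$ gives $W(m_x^{\alpha},m_y^{\alpha})\ge \sum f\,m_x^\alpha-\sum f\,m_y^\alpha$ for the lazy walks $m_x^\alpha=\alpha\delta_x+(1-\alpha)\mu_x$. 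Then $\kappa_{\mathrm{LLY}}(x,y)=\lim_{\alpha\to1}\frac{1}{1-\alpha}\bigl(1-W(m_x^\alpha,m_y^\alpha)/d(x,y)\bigr)$, so a good test function yields an upper bound on $\kappa_{\mathrm{LLY}}^{(3)}(G)$.

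\textbf{Choice of $x,y$ and $f$.} Let $[X,Y]$ be a minimum edge cut, $|[X,Y]|=\kappa'(G)=:k$. We may assume $k<\frac{3}{2}\delta$, since otherwise there is nothing to prove: $\kappa_{\mathrm{LLY}}\le 1$ always holds, and when $k\ge\delta$ we need extra care, but typically $k<\delta$. A classical fact then applies: if $k<\delta$, each side has more than $\delta$ vertices, and in fact each side contains a vertex with no neighbour across the cut. This follows by counting: if every vertex of $X$ were incident to the cut, then $|X|\le k$, but $|X|\ge \delta+1-k/|X|$-type estimates give a contradiction. We would like to pick $x\in X$ and $y\in Y$ with $d(x,y)=3$, for instance $x\in X$ not incident to the cut but adjacent to some $u\in X$ incident to the cut, with $uv$ a cut edge and $y\in N(v)\cap Y$ not incident to the cut. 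The existence of such a path $x u v y$ with $d(x,y)=3$ is a case analysis: if every $y\in N(v)\cap Y$ is a cut vertex, count the cut edges to get $k\ge$ something large. The test function will be $f=0$ on $X$-side vertices far from the cut, graded by distance to the cut: take $f(z)=\min\{d(z,x),3\}$-like functions, or more simply $f(z)=d(z,x)$ truncated, adjusted so that $f$ is $1$-Lipschitz and the boundary layers sit at $1$ and $2$.

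\textbf{Estimate.} With $f=d(\cdot,x)$, we get $\sum f m_y^\alpha-\sum f m_x^\alpha = 3\alpha+(1-\alpha)\bigl(\text{avg}_{N(y)}d(\cdot,x)\bigr)-(1-\alpha)\cdot1$. The average over $N(y)$ is at least $2\cdot(\#\text{neighbours of }y\text{ at distance }2)+3(\dots)+4(\dots)$ over $\deg y$. Since $x$ lies on the other side, neighbours of $y$ at distance $2$ from $x$ must be adjacent to $N(x)$, which by the choice of $x$ lies in $X$. Such neighbours are therefore endpoints of cut edges, giving at most $k$ of them. Comparing $\mu_x$ against $\mu_y$ symmetrically with $f$ chosen via the cut, vertices of $N(x)$ that reach distance $\le$ of $N(y)$ are cut-incident, and we can bound the transport saving by roughly $\frac{2k}{3\delta}$-worth. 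This yields $3\kappa_{\mathrm{LLY}}(x,y)\le \frac{2k}{\delta}$, i.e. $k\ge\frac{3}{2}\delta\,\kappa_{\mathrm{LLY}}^{(3)}$. Matching the constant $\frac{3}{2}$ exactly is the crux: the transport from $N(x)$ to $N(y)$ should cost $\ge 3$ per unit mass, except that mass can save $1$ only through cut-incident neighbours, and each cut edge can account for savings on both sides. This gives $3-W\le 2k/\delta$ after linearising in $1-\alpha$.

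\textbf{Main obstacle.} The hardest step is finding the pair at distance exactly $3$ across the cut, and building a single $1$-Lipschitz $f$ whose savings are charged to cut edges at most twice. We would take $f(z)=0$ for $z\in X$ with no cut neighbour, $1$ for cut-incident $z\in X$, $2$ for cut-incident $z\in Y$, and $3$ otherwise on $Y$. This $f$ is $1$-Lipschitz. The shift is then $3-\frac{(\#N(x)\text{ cut-incident})+(\#N(y)\text{ cut-incident})}{\delta}$, and each count is bounded by $k$. This naive bound gives $3\kappa\le 2k/\delta$ directly, so the real work is the distance-$3$ existence argument. When it fails, the counting should show $k\ge\frac{3}{2}\delta$ outright, which suffices because $\kappa_{\mathrm{LLY}}\le1$.
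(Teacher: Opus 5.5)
Your main case follows the paper's own route. The function equal to $0,1,2,3$ on $X_2,X_1,Y_1,Y_2$ is a truncation of the paper's $d(\cdot,X_2)$ from Case~2 of the proof of Theorem~\ref{diam3}. For $x\in X_2$, $y\in Y_2$ with $d(x,y)=3$ it gives $W(\mu_x^\alpha,\mu_y^\alpha)\ge 3-(1-\alpha)\,2k/\delta$, hence $\kappa_{\mathrm{LLY}}(x,y)\le 2k/(3\delta)$. Lemma~\ref{triangle inequality}, which you should invoke explicitly, transfers this to $\kappa_{\mathrm{LLY}}(G)$.

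When $k<\delta$, the existence of the pair needs no case analysis. Suppose a cut-incident $u\in X$ carrying $c_u$ cut edges had no neighbour in $X_2$. Each other vertex of $X_1$ carries its own cut edge, so $d_u\le(|X_1|-1)+c_u\le k<\delta$, a contradiction. So for any cut edge $uv$ you can take $x\in N(u)\cap X_2$ and $y\in N(v)\cap Y_2$, and then $d(x,y)=3$ automatically.

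\textbf{The gap is the remaining case.} Because $\kappa'(G)\le\delta(G)$ always, your fallback ``the counting should show $k\ge\frac32\delta$ outright'' can never occur. What is actually left is $k=\delta$: the same count shows that a side whose vertices are all cut-incident forces $k\ge\delta$. There the claim is equivalent to $\kappa_{\mathrm{LLY}}(G)\le\frac23$.

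If $\mathrm{diam}(G)\ge3$, this holds. For $d(a,b)=3$, your first test function $d(\cdot,a)$ gives $W\ge 3-2(1-\alpha)$, i.e.\ $\kappa_{\mathrm{LLY}}(a,b)\le\frac23$, and Lemma~\ref{triangle inequality} finishes. In diameter two it fails, and so does the statement itself. For the $4$-cycle $xyba$ one has $\kappa'=\delta=2$. Theorem~\ref{equal degree} with $N_x=\{a\}$, $N_y=\{b\}$, $d(a,b)=1$ gives $\kappa_{\mathrm{LLY}}(C_4)=1$, so $\frac32\delta(G)\kappa_{\mathrm{LLY}}(G)=3>2=\kappa'(G)$. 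The same happens for every $K_n$ minus a nonempty matching, which the paper's introduction itself records as having $\kappa'=\delta$ and $\kappa_{\mathrm{LLY}}=1$.

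The paper's ``immediate'' deduction from Theorem~\ref{diam3} silently drops that theorem's hypothesis $\mathrm{diam}(G)\ge3$, so it has the same hole. Its Case~1 also only yields $\kappa'\ge\delta\,\kappa^{(2)}_{\mathrm{LLY}}$. No argument can close this as stated. If you add the hypothesis $\mathrm{diam}(G)\ge3$ or $\kappa'(G)<\delta(G)$, your main argument together with the $k=\delta$ bound above is a complete proof.
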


For the three graphs in Figure \ref{example}, the equality $\kappa'(G) = \frac{3}{2}\delta(G)  \kappa_{\mathrm{LLY}}(G)$ holds, and hence equality also holds in Theorem \ref{diam3}.
Moreover, let $G_{2n}\cong K_n \cup K_n +M_2$ be the union of two copies of the complete graph $K_n$ ($n\geq3$) with a matching of size 2 between them. Equality holds in Theorem \ref{diam3} for $G_{2n}$; see Lemma \ref{distance3} below.  
In fact, the graphs $G_{2n}$ form an infinite family for which Theorem \ref{diam3} is sharp.

In Riemannian geometry, spaces with positive Ricci curvature enjoy numerous properties, many of which have natural probabilistic interpretations.
We refer to the comprehensive survey \cite{Lott} and the references therein for a discussion of the geometric interest of lower bounds on Ricci curvature and the need for generalized notions of positive Ricci curvature for metric measure spaces.

Hehl \cite{Hehlmin} established a lower bound on the minimum degree that guarantees non-negative Lin--Lu--Yau curvature. He proved that if $\delta(G)\geq \frac{2n}{3}-1$, then $\kappa_{\mathrm{LLY}}(G)\geq0$. Moreover, the bound $\frac{2n}{3}-1$ is optimal.
There is also a lower bound on the connectivity that guarantees positive Lin--Lu--Yau curvature: namely, if $\kappa(G)\geq \frac{n-1}{2}$, then $\kappa_{\mathrm{LLY}}(G)>0$, and the bound $\frac{n-1}{2}$ is optimal; see \cite[Theorem 1.7 and Theorem 1.8]{Liu}.

We derive sufficient conditions on edge-connectivity for positive Lin--Lu--Yau curvature.
The following results show that sufficiently large edge-connectivity implies large Lin--Lu--Yau curvature.

\begin{theorem}\label{n bound}
Let $G$ be a connected graph with $n$ vertices and edge-connectivity $\kappa'(G)$. If 
$$
\kappa'(G)\ge \frac{2n}{3}-1,
$$ 
then $\kappa_{\mathrm{LLY}}(x,y)>0$ for every edge $xy$ of $G$.
\end{theorem}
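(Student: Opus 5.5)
Since $\kappa'(G)\le\delta(G)$, the hypothesis gives $\delta(G)\ge \frac{2n}{3}-1$. By Hehl's theorem \cite{Hehlmin} this already yields $\kappa_{\mathrm{LLY}}(x,y)\ge 0$ for every edge. So the whole task is to exclude the value $0$, and for that the extra strength of edge-connectivity over minimum degree has to be used. My plan is to redo Hehl-type transport estimates, but with the degree counting replaced, at the decisive step, by counting edges across a cut.

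\textbf{Local structure and the transport plan.} Fix an edge $xy$ and write $d_x,d_y\ge \frac{2n}{3}-1$ for the degrees. Let $\Delta_{xy}=N(x)\cap N(y)$, and let
$$A=N(x)\setminus(N(y)\cup\{y\}),\qquad B=N(y)\setminus(N(x)\cup\{x\}).$$
Since $N(x)\cup N(y)\subseteq V$, we get $|\Delta_{xy}|\ge d_x+d_y-n\ge \frac{n}{3}-2$. Also $|A|,|B|\le n-2-|\Delta_{xy}|$, so $A$ and $B$ are small compared with the degrees.

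I will use the limit-free description of $\kappa_{\mathrm{LLY}}$ (Münch--Wojciechowski) or, equivalently, an explicit transport plan from $\mu_x^{\alpha}$ to $\mu_y^{\alpha}$. The plan has three parts:
\begin{itemize}
\item the mass on $\Delta_{xy}$ and on $x,y$ stays in place, up to the $1/d_x$ versus $1/d_y$ discrepancy;
\item mass on $A$ is sent to $B$ along a matching of edges of $G[A,B]$, at cost $1$ per unit;
\item the remaining mass on $A$ is sent through $\Delta_{xy}$ or through $x,y$, at cost at most $2$ per unit (cost $3$ must never be needed).
\end{itemize}
Computing the cost of this plan, one should get
$$\kappa_{\mathrm{LLY}}(x,y)\ \ge\ \frac{|\Delta_{xy}|+2+m-|A|}{\max(d_x,d_y)}\quad\text{(up to routine corrections)},$$
where $m$ is the size of a maximum matching between $A$ and $B$. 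Hehl's bound corresponds to $m\ge 0$ together with $|A|\le |\Delta_{xy}|+2$. Hence curvature $0$ can only occur in the tight configuration: $|A|$ and $|B|$ are close to $\frac n3$, $|\Delta_{xy}|$ is close to $\frac n3-2$, and $G[A,B]$ has a small maximum matching.

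\textbf{From a small matching to a small edge cut.} By K\H{o}nig's theorem, a small maximum matching in $G[A,B]$ gives a small vertex cover $C$. So most of $A$, namely $A'=A\setminus C$, has no neighbours in $B\setminus C$. I will then take
$$X=A'\cup\{\text{suitable vertices near }x\},\qquad \bar X=V\setminus X,$$
and count $|E(X,\bar X)|$. Each vertex of $A'$ is adjacent only to vertices of $A\cup\Delta_{xy}\cup\{x\}\cup C$. Using the tight counts above, this total should come out strictly below $\frac{2n}{3}-1$, contradicting the hypothesis on $\kappa'(G)$. In the extremal examples for Hehl's bound (three blocks of size about $\frac n3$) this cut is exactly the sparse layer, which is what I expect to reproduce.

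\textbf{Main obstacle.} The step I expect to be hardest is this last counting. The cut must be chosen so that every edge leaving $X$ is charged to the "deficiency" of the configuration, and the degree condition alone does not make the cut small; only the near-equality forced by $\kappa_{\mathrm{LLY}}(x,y)=0$ does. I would first try $X=A'$ alone, and if the edges from $A'$ into $\Delta_{xy}$ are too many, enlarge $X$ to $A'$ plus the part of $\Delta_{xy}$ not adjacent to $B$. The other risk is boundary cases where $d_x\neq d_y$ or $|A|\neq|B|$, where the $1/d_x$ versus $1/d_y$ terms must be tracked carefully.
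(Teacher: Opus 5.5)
\textbf{There is a genuine gap.} Reducing the problem to excluding the value $0$ is sound. The step meant to do this, however, rests on a false premise, and as designed it does not produce a contradiction.

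Edge-connectivity carries no extra strength here. $\kappa'(G)\ge\frac{2n}{3}-1$ gives $\delta(G)\ge\frac{2n}{3}-1\ge\frac{n-1}{2}$ (for $n\ge 3$). Then every cut $(S,V\setminus S)$ with $|S|=s\le n/2$ has at least $s(\delta(G)-s+1)\ge\delta(G)$ edges, so $\kappa'(G)=\delta(G)$ (Lemma~\ref{kappadelta}). A sparse nontrivial cut is therefore no easier to find than a low-degree vertex, and your counting does not make the cuts you propose sparse.

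Look at the only configuration your transport bound leaves open: $n=3r$, $d_x=d_y=2r-1$, $|\Delta_{xy}|=r-2$, $|A|=|B|=r$ and $V=\{x,y\}\cup\Delta_{xy}\cup A\cup B$. Here curvature $0$ forces $m=0$, so $A'=A$. The edges leaving $A$ go only to $x$ and $\Delta_{xy}$, and your count bounds them by $r(1+(r-2))=r(r-1)$, which is not below $2r-1$ once $r\ge 3$. Taking $X=A\cup\Delta_{xy}$ does not help: its boundary already contains the $r+2(r-2)=3r-4\ge 2r-1$ edges to $\{x,y\}$. So the promised ``strictly below $\frac{2n}{3}-1$'' does not come out. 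The curvature estimate itself, in particular for $d_x\ne d_y$, is also left as ``routine corrections''.

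The missing observation is that the tight case is rigid, not approximate, and a one-vertex degree count rules it out. The paper proceeds as follows.
\begin{itemize}
\item It takes $d_x\le d_y$ and $\alpha=\frac{1}{d_x+1}$, which lies in the linear range of Theorem~\ref{relation}.
\item Since $\mathrm{diam}(G)\le 2$ (Lemma~\ref{diam}), $\nu_3^\alpha=0$, so $\kappa_\alpha(x,y)=\nu_0^\alpha-\nu_2^\alpha\ge 2\nu_0^\alpha-1$.
\item Computing $\nu_0^\alpha$ via Lemma~\ref{simple transport plan} gives $\kappa_{\mathrm{LLY}}(x,y)\ge\frac{2|\Delta_{xy}|+3-d_y}{d_y}$.
\item With $|\Delta_{xy}|\ge d_x+d_y-n$, the numerator is at least $3\kappa'(G)+3-2n\ge 0$. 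Equality forces $d_x=d_y=\delta(G)=2r-1$ and $N(x)\cup N(y)=V$, which is exactly the configuration above. This also disposes of unequal degrees with no fine tracking.
\item In that configuration, a vertex $u\in A$ with no neighbour in $B$ would have $d_u\le 1+(r-1)+(r-2)=2r-2<\delta(G)$. So an $A$--$B$ edge exists.
\item Theorem~\ref{equal degree}, with one pair at distance $1$ and $r-1$ pairs at distance at most $2$, then gives $\kappa_{\mathrm{LLY}}(x,y)\ge\frac{1}{2r-1}>0$.
\end{itemize}
In your notation: zero curvature forces $m=0$ exactly, and the right cut is the singleton $\{u\}$. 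K\H{o}nig's theorem and larger cuts are not needed.
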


After deriving this lower bound on the edge-connectivity of a graph that guarantees positive Lin--Lu--Yau curvature, we proceed to discuss the sharpness of the bound.
We notice that the lower bound $\frac{2n}{3}-1$ is optimal. There exist infinitely many graphs with edge-connectivity $\frac{2n}{3}-2$, each containing an edge of negative Lin--Lu--Yau curvature. Precisely, we have the following result.

\begin{theorem}\label{n bound sharpness}
Let $n \in \mathbb{N}$ such that $2n/3 \in 2\mathbb{N}$ and $2n/3 \geq 6$. Then there exist infinitely many graphs with $n$ vertices and edge-connectivity 
$$
\kappa'(G) = \frac{2n}{3} - 2
$$ 
that contain an edge $xy$ with $\kappa_{\mathrm{LLY}}(x, y) < 0$.
\end{theorem}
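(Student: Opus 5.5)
The approach is a direct construction: fix an edge $xy$ whose neighbourhoods are arranged so that most of the mass of $\mu_x$ has to travel distance $2$ or more to reach $\mu_y$. Choose everything else so that the minimum degree, and hence the edge-connectivity, is exactly $\frac{2n}{3}-2$. Write $n=3k$ with $k\ge 3$, so the target edge-connectivity is $2k-2$.

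\textbf{Construction.} I would partition $V$ into $\{x,y\}$, a set $T$ of common neighbours of $x$ and $y$, two sets $A'$ and $B'$ of private neighbours of $x$ and of $y$, and a small "hub" set $R$. Each of $T$, $A'\cup T$, $B'\cup T$ and $R$ is a clique. The sets $A'$ and $B'$ are joined to $R$ completely, and there are no edges between $A'$ and $B'$. Then every vertex of $A'$ is at distance $2$ from every vertex of $B'$, and never at distance $1$. The parameters must satisfy
\[
2+|T|+|A'|+|B'|+|R|=3k \quad\text{and}\quad \deg x=1+|T|+|A'|\ \ge\ 2k-2 .
\]
A first attempt is $|A'|=|B'|=k-2$, $|T|=k-1$, $|R|=3$; I would then adjust by one or two vertices. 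To obtain infinitely many graphs for each admissible $n$ (in fact one needs many non-isomorphic ones), I would vary the edges between $T$ and $R$, or delete a sparse set of edges inside the cliques. This can be done freely as long as no degree drops below $2k-2$ and no distance used in the curvature estimate changes.

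\textbf{Edge-connectivity.} Next I would show $\kappa'(G)=2k-2$. The upper bound comes from some vertex of degree exactly $2k-2$, or from the edge cut around $A'\cup\{x\}$. For the lower bound I would take an arbitrary edge cut $[S,\bar S]$ and argue by cases. If $S$ or $\bar S$ is a single vertex, the cut has at least $\delta(G)$ edges. Otherwise each side meets one of the big cliques in a way that forces at least $2k-2$ crossing edges, because a clique of size $m$ split as $s+(m-s)$ contributes $s(m-s)$ edges, and the complete joins to $R$ contribute more.

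\textbf{Curvature.} For the negative curvature I would use the standard formula $\kappa_{\mathrm{LLY}}(x,y)=\lim_{p\to1}\kappa_p(x,y)/(1-p)$. To get an upper bound on the curvature I need a lower bound on the Wasserstein distance, so I would exhibit a $1$-Lipschitz function $f$ and use Kantorovich duality. A natural choice is $f=0$ on $A'\cup\{x\}$, $f=1$ on $T\cup\{y\}\cup R$, and $f=2$ on $B'$. Since $A'$ and $B'$ are nonadjacent and $R$ separates them, $f$ is $1$-Lipschitz, and
\[
\sum_v f(v)\bigl(\mu_y(v)-\mu_x(v)\bigr)
\]
can be computed explicitly in terms of $|T|,|A'|,|B'|$ and the degrees. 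Negativity then reduces to an inequality of the form $|A'|,|B'|\gtrsim |T|$ relative to the degree.

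\textbf{Main obstacle.} The hard step is this parameter balance. Large $T$ raises $\deg x$ to the required $2k-2$, but it also helps the transport and pushes the curvature up. Small $T$ forces large $A'$ and $B'$, which may exceed the vertex budget $3k$. I expect to need $|T|$ roughly $\frac{2}{3}$ of the excess and to fine-tune by letting some vertices of $T$ be adjacent to only one of $A'$ or $B'$. This is exactly where the hypotheses "$\frac{2n}{3}$ even" and "$\ge 6$" should enter, by making the sizes integral and making the negative term strictly dominate.
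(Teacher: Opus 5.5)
Your framework is the paper's: $T$, $A'$, $B'$, $R$ play the roles of $Z$, $X$, $Y$, $\{v\}$. The genuine gap is that the proposal stops exactly at the step that constitutes the proof. It never exhibits parameters with $\kappa_{\mathrm{LLY}}(x,y)<0$, and the one explicit choice fails.

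Every vertex of $A'$ is at distance $2$ from every vertex of $B'$. So when $|A'|=|B'|$ (hence $d_x=d_y=d=1+|T|+|A'|$), Theorem~\ref{equal degree} gives
\[
\kappa_{\mathrm{LLY}}(x,y)=\frac{d+1-2|A'|}{d}=\frac{|T|+2-|A'|}{d}.
\]
Your Lipschitz function yields this same value as an upper bound. Negativity therefore requires $|A'|\ge |T|+3$. Your first attempt ($|A'|=|B'|=k-2$, $|T|=k-1$, $|R|=3$) gives $\kappa_{\mathrm{LLY}}(x,y)=\frac{3}{2k-2}>0$.

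Combining $|A'|\ge|T|+3$ with $2+|T|+2|A'|+|R|=3k$ and $1+|T|+|A'|\ge 2k-2$ gives $k\le |A'|\le k+1-|R|$, so $|R|\le 1$. The case $|R|=0$ fails: a vertex of $A'$ then has degree at most $d-1$, which forces $d\ge 2k-1$, and the same count gives $k+\frac12\le|A'|\le k$.

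Unequal sizes or partial adjacency of $T$ do not rescue other choices. Since no mass travels distance $3$, the proof of Theorem~\ref{n bound} gives $\kappa_{\mathrm{LLY}}(x,y)\ge \frac{2(|T|+1)}{d_y}+\frac{1}{d_x}-1$ for $d_x\le d_y$. Together with $|T|=d_x+d_y-3k+|R|$, negativity forces $2d_x+d_y+2|R|+4\le 6k$. Since $d_x,d_y\ge 2k-2$, this again gives $|R|\le1$, and with $|R|=1$ it forces $d_x=d_y=2k-2$.

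So the construction is forced: $|R|=1$, $|T|=k-3$, $|A'|=|B'|=k$, with $A'$ and $B'$ cliques completely joined to $T$ and to the single hub vertex. This is exactly the paper's graph, where $\kappa_{\mathrm{LLY}}(x,y)=-\frac{1}{2k-2}$. Your heuristics ("$|T|$ roughly $\frac23$ of the excess", letting $T$-vertices see only one side) point away from it. The hypothesis $2n/3\ge 6$ enters simply as $|T|=k-3\ge 0$.

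Two further points.

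\textbf{Edge-connectivity.} Your lower bound is only a sketch; "each side meets one of the big cliques in a way that forces\dots" is not yet an argument. For the forced graph it is immediate from Lemma~\ref{kappadelta}, because $\delta(G)=2k-2\ge \frac{3k-1}{2}=\frac{n-1}{2}$ exactly when $k\ge 3$. This is the other place the hypothesis is used.

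\textbf{Infinitely many graphs.} Your plan to produce "infinitely many graphs for each admissible $n$" cannot work, since there are only finitely many graphs on $n$ vertices. The statement can only mean one such graph for every admissible $n$, which is what the paper proves. Moreover, in the forced graph every vertex of $A'\cup B'$ already has degree exactly $2k-2$. Deleting edges inside those cliques would therefore lower $\delta(G)$ and $\kappa'(G)$; only edges inside $T$ or between $T$ and the hub are free.
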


We derive a sufficient condition, depending only on the number of edges, for positive Lin--Lu--Yau curvature.

\begin{theorem}\label{m bound}
Let $G$ be a connected graph with $m$ edges and edge-connectivity $\kappa'(G)$. If $$
\kappa'(G)\ge \left\lceil\frac{\sqrt{1+\frac{16}{3}m}-1}{2}
\right\rceil,
$$ 
then $\kappa_{\mathrm{LLY}}(x,y)>0$ for every edge $xy$ of $G$.
\end{theorem}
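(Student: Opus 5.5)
The plan is to reduce Theorem~\ref{m bound} directly to Theorem~\ref{n bound}, by converting the hypothesis on $m$ into the hypothesis $\kappa'(G)\ge \frac{2n}{3}-1$. The two facts needed are Whitney's inequality $\kappa'(G)\le\delta(G)$ from the introduction and the handshake lemma.

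Write $k=\kappa'(G)$. If $m=0$, then $G$ is a single vertex and the statement is vacuous, so assume $m\ge 1$. Then the hypothesis gives $k\ge 1$.

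First step: bound $n$ in terms of $m$ and $k$. By the handshake lemma and Whitney's inequality,
$$
2m=\sum_{v}d(v)\ge n\,\delta(G)\ge nk,
$$
so $n\le \frac{2m}{k}$. Consequently
$$
\frac{2n}{3}-1\le \frac{4m}{3k}-1 .
$$

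Second step: rewrite the hypothesis. Since $k$ is an integer, $k\ge \lceil\cdot\rceil$ is equivalent to $k\ge \frac{\sqrt{1+\frac{16}{3}m}-1}{2}$, the positive root of $t^2+t-\frac{4m}{3}=0$. Hence $k^2+k\ge \frac{4m}{3}$, which after dividing by $k>0$ reads
$$
\frac{4m}{3k}\le k+1 .
$$

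Third step: combine the two bounds. They give $\frac{2n}{3}-1\le \frac{4m}{3k}-1\le k$, that is, $\kappa'(G)\ge \frac{2n}{3}-1$. Theorem~\ref{n bound} then yields $\kappa_{\mathrm{LLY}}(x,y)>0$ for every edge $xy$ of $G$.

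There is no serious obstacle. The only points needing care are the degenerate case $m=0$ and the observation that the ceiling in the hypothesis is harmless because $k$ is an integer. All the curvature content is contained in Theorem~\ref{n bound}, which we are allowed to assume.
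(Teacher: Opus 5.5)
Your proposal is correct and follows the paper's argument. The handshake lemma together with $\kappa'(G)\le\delta(G)$ gives $n\le 2m/\kappa'(G)$, the hypothesis gives $\kappa'(G)^2+\kappa'(G)\ge 4m/3$, and together these yield $\kappa'(G)\ge \frac{2n}{3}-1$, so Theorem~\ref{n bound} applies. Your explicit handling of the case $m=0$, which makes dividing by $k$ legitimate, covers a point the paper passes over without comment.
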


We also discuss the sharpness of the bound.

\begin{theorem}\label{m bound sharpness}
Let $m \in \mathbb{N}$ such that $m \geq 11$. Then there exist infinitely many graphs with $m$ edges and edge-connectivity 
$$
\kappa'(G)=\left\lceil\frac{\sqrt{1+\frac{16}{3}m}-1}{2}\right\rceil-1
$$ 
that contain an edge $xy$ with $\kappa_{\mathrm{LLY}}(x, y) = 0$.
\end{theorem}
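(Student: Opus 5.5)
We need a construction, for each $m\ge 11$, of infinitely many graphs with $m$ edges, with $\kappa'(G)=k-1$ where $k=\lceil(\sqrt{1+16m/3}-1)/2\rceil$, and containing an edge of curvature exactly $0$. The natural extremal example for an edge with small curvature and large edge-connectivity is this: take an edge $xy$ with $N(x)\setminus\{y\}$ and $N(y)\setminus\{x\}$ disjoint, both of size $d-1$, and no common neighbours. The two neighbourhoods should be matched by a perfect matching, so that the transport plan moving each neighbour of $x$ to its matched neighbour of $y$ costs $1$. Then $\kappa_{\mathrm{LLY}}(x,y)=0$ by the Lin--Lu--Yau limit-free formula.

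Concretely, for $d=k-1$ I would take a cycle-like ``prism'' structure. Put $x\cup A$ and $y\cup B$ with $|A|=|B|=d-1$, $x$ complete to $A$, $y$ complete to $B$, and a perfect matching $M$ between $A$ and $B$. To push edge-connectivity up to $d$ without creating triangles through $xy$ or $4$-cycles $x$--$a$--$b$--$y$ beyond the matching, I would add further vertex sets. The step I would try first is to check whether $\frac{\sqrt{1+16m/3}-1}{2}$ arises from $m\approx\frac{3}{4}(d^2+d)$, i.e.\ $d(d+1)=\frac{4}{3}m$ roughly. This suggests three blocks of size about $d/2$ or so: three cliques, or a structure with $\tfrac32\cdot\tfrac{d(d+1)}{2}$ edges. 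Exploiting this, I would build the graph as three disjoint copies of $K_{d/2+1}$-type cliques joined so each vertex has degree $\ge d$. The edge $xy$ joins two cliques, and the neighbourhoods are matched through the third. This yields edge count $\sim\frac34 d^2$, matching the bound.

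The proof steps are as follows.
\begin{enumerate}
\item Fix $d=k-1$ and define the three-part graph with exactly $m$ edges. Any surplus edges (since $m$ is not exactly of the extremal form) are placed inside blocks away from $x,y$, or as extra pendant-free structures. The infinitely-many claim comes from varying where the surplus edges go, or from varying the choice of the matching. Non-isomorphic choices give infinitely many graphs only if $m$ varies, so ``infinitely many'' should be read as holding over the admissible $m$; I would state it that way.
\item Verify $\delta(G)=d$ and compute $\kappa'(G)=d$ by exhibiting $d$ edge-disjoint paths between every pair via Menger's theorem. The block structure makes cuts between blocks have size $\ge d$ because each inter-block connection is a $d$-edge matching-plus-star.
\item Verify $\kappa_{\mathrm{LLY}}(x,y)=0$. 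The upper bound $\le 0$ uses a $1$-Lipschitz test function: $f=1$ on $y$ and the far side, $0$ on $x$'s side, suitably at distance $\ge 2$. The lower bound $\ge0$ uses the explicit matching transport plan via Lemma~\ref{triangle inequality}-style transport estimates.
\item Check $k-1$ equals the formula, using $m\ge 11$ to ensure $d\ge 3$ and that the surplus edges fit without changing degrees.
\end{enumerate}

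The main obstacle is step 1: reconciling an arbitrary $m$ with the rigid extremal edge count while keeping $\kappa'$ exactly $k-1$ rather than $k$. If it fails, I would fall back on adding a disjoint-from-$xy$ clique bridge that absorbs extra edges while preserving the minimum edge cut at a vertex of degree $d$.
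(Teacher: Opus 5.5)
There is a genuine gap, and it sits in the curvature claim at the core of your construction. Take $d_x=d_y=d$, no common neighbours, and $N_x=S_1(x)\setminus B_1(y)$, $N_y=S_1(y)\setminus B_1(x)$, each of size $d-1$, joined by a perfect matching. Then Theorem~\ref{equal degree} gives optimal assignment cost $d-1$, hence
$\kappa_{\mathrm{LLY}}(x,y)=\frac{1}{d}\bigl(d+1-(d-1)\bigr)=\frac{2}{d}>0$. This is exactly the hypercube edge, $\kappa_{\mathrm{LLY}}(Q_d)=2/d$.

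So step 3 cannot be completed: no test function certifies $\kappa\le 0$ in a matched configuration, because unit-cost matched pairs are precisely what make the curvature positive. To get curvature $0$, the optimal assignment must cost $d+1$. With $c$ common neighbours and every $N_x$--$N_y$ distance equal to $2$, the cost is $2(d-1-c)$, which equals $d+1$ exactly when $d=2c+3$. Hence the two neighbourhoods must be at distance $2$ rather than matched. For $d>3$ you also need common neighbours, which you explicitly excluded.

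This balance is the missing idea, and the paper builds it in directly:
\begin{itemize}
\item take vertices $x,y,v$, an independent set $A$ with $|A|=t$, and cliques $X,Y$ with $|X|=|Y|=t+2$;
\item join $x$ to $A\cup X$ and $y$ to $A\cup Y$;
\item make $A\cup\{v\}$ complete to $X\cup Y$, with no $X$--$Y$ edges.
\end{itemize}
Then $d_x=d_y=\delta(G)=2t+3$, $N_x=X$, $N_y=Y$, and every cross distance is $2$ (via $v$). So $\kappa_{\mathrm{LLY}}(x,y)=\frac{1}{2t+3}\bigl(2t+4-2(t+2)\bigr)=0$.

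The rest of your plan is too unspecific to check, and parts of it point the wrong way.

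\emph{The third block must be sparse, not a clique.} In the paper, $A\cup\{v\}$ is independent. Adding the $\binom{t+1}{2}$ edges inside it leaves $\delta$ and $\kappa'$ unchanged, but gives $1+\frac{16}{3}m>(4t+9)^2$ once $t\ge 3$. Then $\lceil(\sqrt{1+\frac{16}{3}m}-1)/2\rceil\ge 2t+5$, and $\kappa'=2t+3$ falls below the required value.

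\emph{Step 2 needs no Menger argument.} Here $n=3t+7$ and $\delta=2t+3\ge (n-1)/2$, so Lemma~\ref{kappadelta} gives $\kappa'(G)=\delta(G)$ at once.

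\emph{On ``infinitely many'' and arbitrary $m$.} You are right that ``infinitely many'' can only sensibly range over $m$, since for fixed $m$ there are only finitely many connected graphs. The paper's proof produces exactly one graph for each $m=3t^2+13t+11$, $t\ge 0$. It then checks $(4t+7)^2<1+\frac{16}{3}m\le(4t+9)^2$, so that $2t+3$ equals the stated value. Your surplus-edge adjustment for arbitrary $m$ in step 1 is left unresolved: extra edges change degrees and can shift $\kappa'$ or the curvature. The paper's argument never attempts it. Drop it and compute $m$ exactly for an explicit family instead.
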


\section{Preliminaries}

In this section, we revisit key concepts from Graph Theory and Optimal Transport Theory.
Next, we collect useful results on Ollivier’s discrete Ricci curvature for graphs and its modification by Lin, Lu, and Yau.

\subsection{Terminology and  notations}
Throughout the paper, we use the following notations. 
Let $G = (V, E)$ be an undirected simple locally finite graph and $V, E$ respectively denote the set of vertices and edges of $G$.

For $x \in V$ and $r \in \mathbb{N}$ we define the \textit{$r$-sphere centered at $x$} as $S_r(x) = \{y \in V : d(x,y) = r\}$ and the \textit{$r$-ball centered at $x$} as $B_r(x) = \{y \in V : d(x,y) \leq r\}$. Denote by $d_x = |S_1(x)|$ the \textit{degree} of a vertex $x \in V$. The \textit{minimum degree} of a graph $G$ is denoted by $\delta(G) = \min_{x \in V} d_x$. We call a graph $G$ \textit{locally finite} if every vertex has finite degree. The graph is said to be \textit{$d$-regular} if every vertex has the same degree $d$.
Given an edge $xy$, we denote the set of common neighbors of $x$ and $y$ by $N_{xy} = S_1(x) \cap S_1(y)$.

The \textit{distance} between two vertices $x, y$ is denoted by $d(x, y)$. 
The \textit{diameter} of $G$ is denoted by $\mathrm{diam}(G) = \max_{x,y \in V} d(x,y)$.
For two vertex sets $X$ and $Y$, the distance between $X$ and $Y$ is denoted by
$$d(X, Y) := \min \{ d(x, y) \mid x \in X, y \in Y \}.$$
When $X = \{x\}$, we simply write $d(x, Y)$ instead of $d(\{x\}, Y)$. 
For two disjoint vertex sets $X$ and $Y$, we write $E(X, Y)$ for the set of edges with one end in $X$ and the other in $Y$.


\subsection{Ollivier-Ricci and Lin--Lu--Yau curvature}\label{Lin--Lu--Yau curvature}

We first recall the definition of Wasserstein distance, which is a metric on the space of probability measures and plays a fundamental role in optimal transport theory.
For a graph $G = (V, E)$, a \textit{probability measure} on $G$ is a function $\mu : V \to [0, 1]$ such that $\sum_{x \in V} \mu(x) = 1$.

\begin{defi}[Wasserstein distance] 
Let $G = (V, E)$ be a locally finite graph. Let $\mu_1, \mu_2$ be two probability measures on $G$. The \textit{Wasserstein distance} $W(\mu_1, \mu_2)$ between $\mu_1$ and $\mu_2$ is defined as
$$
W(\mu_1, \mu_2) = \inf_{\pi} \sum_{x, y \in V} d(x, y) \pi(x, y),
$$
where the infimum is taken over all the maps $\pi : V \times V \to [0, 1]$ satisfying
$$
\mu_1(x) = \sum_{y \in V} \pi(x, y), \quad \forall x \in V
$$
and
$$
\mu_2(y) = \sum_{x \in V} \pi(x, y), \quad \forall y \in V.
$$
\end{defi}

Such a map is called a \textit{transport plan} from $\mu_1$ to $\mu_2$.
Imagine two distributions, $\mu_1$ and $\mu_2$, as piles of earth. A transport plan $\pi$ transforms one pile of earth into another. The minimal effort which is required for such a transformation is measured by the Wasserstein distance. If the infimum is attained, we call $\pi$ an \textit{optimal transport plan} transporting $\mu_1$ to $\mu_2$.

The Kantorovich duality tells the following identity about Wasserstein distance (see, e.g., \cite[Theorem 1.14]{Villani}):
$$
W(\mu_1, \mu_2) = \sup_f \sum_{x \in V} f(x) (\mu_1(x) - \mu_2(x)),
$$
where the supremum is taken over all 1-Lipschitz functions $f$, which means 
$$
|f(x) - f(y)| \leq d(x, y), \quad \forall x, y \in V.
$$

In optimal transport theory, it is well known that no mass needs to be transported when it is already shared between the two probability measures.

\begin{lemma}\label{simple transport plan}   
Let $G = (V, E)$ be a locally finite graph. Let $\mu_1, \mu_2$ be two probability measures on $V$. Then there exists an optimal transport plan $\pi$ transporting $\mu_1$ to $\mu_2$ satisfying
$$
\pi(x, x) = \min\{\mu_1(x), \mu_2(x)\}
$$
for all $x \in V$.
\end{lemma}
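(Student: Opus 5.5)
Starting from an arbitrary optimal transport plan $\pi_0$ from $\mu_1$ to $\mu_2$, I will modify it step by step so that it keeps the same cost and ends up satisfying $\pi(x,x)=\min\{\mu_1(x),\mu_2(x)\}$. An optimal plan exists because in the locally finite setting the measures we use have finite support, so the infimum is taken over a compact polytope of plans. In general I would first restrict to measures with finite support, which is the case needed later in the paper; for countable support the same exchange argument runs vertex by vertex.

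\textbf{The exchange step.} Take an optimal plan $\pi$. Suppose some vertex $z$ has $\pi(z,z)<\min\{\mu_1(z),\mu_2(z)\}$. The marginal conditions then force mass to leave $z$ and mass to arrive at $z$. Concretely, there is $u\neq z$ with $\pi(z,u)>0$, because the row sum at $z$ equals $\mu_1(z)>\pi(z,z)$. Likewise there is $w\neq z$ with $\pi(w,z)>0$, because the column sum equals $\mu_2(z)>\pi(z,z)$. Set
$$\varepsilon=\min\{\pi(z,u),\,\pi(w,z),\,\min\{\mu_1(z),\mu_2(z)\}-\pi(z,z)\}>0.$$
Define $\pi'$ from $\pi$ by subtracting $\varepsilon$ from $\pi(z,u)$ and from $\pi(w,z)$, and adding $\varepsilon$ to $\pi(z,z)$ and to $\pi(w,u)$. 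Every row and column sum is unchanged, so $\pi'$ is again a transport plan. By the triangle inequality its cost changes by
$$\varepsilon\bigl(d(w,u)+0-d(z,u)-d(w,z)\bigr)\le 0.$$
Since $\pi$ was optimal, $\pi'$ is optimal too.

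\textbf{Termination.} Each exchange either raises $\pi(z,z)$ all the way to $\min\{\mu_1(z),\mu_2(z)\}$, or sets one of the entries $\pi(z,u)$, $\pi(w,z)$ to zero. I expect making sure the process stops to be the main obstacle. I would avoid a delicate counting argument with a cleaner route. Among all optimal plans, which form a compact set, choose one that maximizes $\sum_x \pi(x,x)$; this is a continuous function, so a maximizer exists. If that plan violated the claim at some $z$, the exchange above would produce an optimal plan with strictly larger diagonal sum, since $\pi(z,z)$ grows by $\varepsilon$ and no other diagonal entry decreases. Here we need $u\neq z$ and $w\neq z$; if $w=u$ the entry $\pi(u,u)$ only gains. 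That contradicts maximality.

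Finally, the inequality $\pi(x,x)\le\min\{\mu_1(x),\mu_2(x)\}$ holds automatically from the marginal constraints. So the maximizer achieves equality at every vertex, which proves the lemma.
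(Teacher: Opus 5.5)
The paper gives no proof of this lemma; it quotes it as a well-known fact from optimal transport. Your argument is the standard one, and it is correct.

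\begin{itemize}
\item \emph{Exchange step.} Because $u\neq z$ and $w\neq z$, the four modified entries are distinct, including when $w=u$. So every row and column sum is preserved and all entries stay nonnegative.
\item \emph{Cost.} The triangle inequality $d(w,u)\le d(w,z)+d(z,u)$ shows the cost does not increase.
\item \emph{Termination.} Choosing, among optimal plans, one that maximizes $\sum_x\pi(x,x)$ cleanly replaces any termination argument.
\end{itemize}

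Combined with the trivial bound $\pi(x,x)\le\min\{\mu_1(x),\mu_2(x)\}$ from the marginals, this proves the claim. For finitely supported measures everything is rigorous, since plans then live in a compact polytope in finite dimension. That is the only case the paper uses, because each $\mu_x^\alpha$ is supported on $B_1(x)$.

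The one soft spot is your remark that for countable support ``the same exchange argument runs vertex by vertex.'' That would need infinitely many exchanges and a limiting argument you do not give. The lemma as stated does allow arbitrary probability measures on a possibly infinite locally finite graph. You do not need a vertex-by-vertex procedure, because your extremal argument extends verbatim:

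\begin{itemize}
\item The couplings of $\mu_1,\mu_2$ on the countable discrete space $V\times V$ form a tight family, since the marginals are fixed. They are therefore weakly compact.
\item The cost $\pi\mapsto\sum d(x,y)\pi(x,y)$ is lower semicontinuous, being a sum of nonnegative continuous terms. Hence the set of optimal plans is nonempty and compact; if $W=\infty$, every plan is optimal.
\item The map $\pi\mapsto\pi(\Delta)$ is continuous, because the diagonal $\Delta$ is clopen in the discrete topology.
\end{itemize}

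So a diagonal-maximizing optimal plan exists, and your exchange step applies to it unchanged.
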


To each vertex $x \in V$ and idleness parameter $\alpha \in [0, 1]$, we associate a probability measure $\mu_x^\alpha$ defined by
$$
\mu_x^\alpha(v) =
\begin{cases} 
\alpha, & \text{if } v = x; \\ 
\frac{1-\alpha}{d_x}, & \text{if } xv \in E; \\ 
0, & \text{otherwise}.
\end{cases}
$$

Then, the Ollivier-Ricci curvature is defined as follows.

\begin{defi}[Ollivier-Ricci curvature]
Let $G = (V, E)$ be a locally finite graph. For any two distinct vertices $x$ and $y$, the \textit{$\alpha$-Ollivier curvature} $\kappa_\alpha(x, y)$, $\alpha \in [0, 1]$, is defined as
$$
\kappa_\alpha(x, y) = 1 - \frac{W(\mu_x^\alpha, \mu_y^\alpha)}{d(x, y)}.
$$
The parameter $\alpha$ is called the \textit{idleness}.
\end{defi}

Following \cite{Eidi}, we present a more intuitive formulation of the Ollivier-Ricci curvature. To this end, denote by $\nu_i^\alpha$ the mass transported with distance $i$ under an optimal transport plan transporting $\mu_x^\alpha$ to $\mu_y^\alpha$. Then
$$
\sum_{i=0}^{3} \nu_i^\alpha = 1 \quad \text{and} \quad W(\mu_x^\alpha, \mu_y^\alpha) = \sum_{i=1}^{3} i \nu_i^\alpha.
$$
Hence, for any edge $xy\in E$, we conclude
$$\kappa_\alpha(x, y) = \nu_0^\alpha - \nu_2^\alpha - 2 \nu_3^\alpha.$$
This formula plays a crucial role in proving our results.

Recall that the $\alpha$-Ollivier-Ricci curvature depends on the idleness parameter. Ollivier initially considered the cases $\alpha = 0$ and $\alpha = \frac{1}{2}$, but its behavior across the full range of $\alpha$ remained an open question. Lin, Lu, and Yau \cite{LLY} first established that $\kappa_\alpha$ is concave in $\alpha \in [0, 1]$. Using $\kappa_1(x, y) = 0$, one obtains that the function $h(\alpha) = \frac{\kappa_\alpha(x, y)}{1-\alpha}$ is increasing on the interval $[0, 1)$. By further proving that $h(\alpha)$ is bounded, they concluded that the limit $\lim_{\alpha \to 1} h(\alpha)$ exists. Building on this result, Lin, Lu, and Yau introduced a modified version of the Ollivier-Ricci curvature, obtained by differentiating $\kappa_\alpha$ with respect to the idleness parameter $\alpha$.

\begin{defi}[Lin--Lu--Yau curvature]
Let $G = (V, E)$ be a locally finite graph. For any two distinct vertices $x$ and $y$, the \textit{Lin--Lu--Yau curvature} is defined as
$$
\kappa_{\mathrm{LLY}}(x, y) = \lim_{\alpha \to 1} \frac{\kappa_\alpha(x, y)}{1-\alpha}.
$$
\end{defi}

In \cite{Bourne}, Bourne et al. conducted a more detailed analysis of how $\kappa_\alpha$ depends on the idleness parameter $\alpha$, by introducing the Ollivier-Ricci idleness function $\alpha \mapsto \kappa_\alpha(x, y)$. They demonstrated that the idleness function is piecewise linear with at most three linear parts. Additionally, they determined the length of the final linear part.

\begin{theorem}[\cite{Bourne}, Theorem 4.4]
Let $G = (V, E)$ be a locally finite graph and let $x, y \in V$, $xy \in E$ with $d_x \geq d_y$. Then $\alpha \mapsto \kappa_\alpha(x, y)$ is linear over $\left[ \frac{1}{d_x+1}, 1 \right]$.
\end{theorem}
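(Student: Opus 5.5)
The plan is to work entirely on the dual side, via the Kantorovich duality recalled above. Write $\mu_x^\alpha=\alpha\delta_x+(1-\alpha)m_x$ and $\mu_y^\alpha=\alpha\delta_y+(1-\alpha)m_y$, where $m_x$ is the uniform measure on $S_1(x)$. Then
$$
W(\mu_x^\alpha,\mu_y^\alpha)=\sup_f\Big[\alpha\big(f(x)-f(y)\big)+(1-\alpha)\sum_{v}f(v)\big(m_x(v)-m_y(v)\big)\Big],
$$
with the supremum over $1$-Lipschitz $f$. We may restrict to finitely supported data, since only $B_2(x)\cup B_2(y)$ matters, so the supremum is attained. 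Since $d(x,y)=1$, every admissible $f$ satisfies $f(x)-f(y)\le 1$. The key claim is the following: for every $\alpha\in[\frac{1}{d_x+1},1]$, the supremum is attained by some $f$ with $f(x)-f(y)=1$. Granting this, we get
$$
W(\mu_x^\alpha,\mu_y^\alpha)=\alpha+(1-\alpha)C,\qquad C:=\sup\Big\{\textstyle\sum_v f(v)(m_x(v)-m_y(v)) : f \text{ 1-Lipschitz},\ f(x)-f(y)=1\Big\},
$$
where $C$ does not depend on $\alpha$. Hence $\kappa_\alpha(x,y)=1-W(\mu_x^\alpha,\mu_y^\alpha)$ is affine on that interval.

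To prove the claim, take an optimal $f$ with $t:=1-(f(x)-f(y))>0$. I would lower the value at $y$ rather than raise the value at $x$, because the hypothesis $d_x\ge d_y$ favours this direction. Define
$$
g(v)=\min\{f(v),\,f(y)-t+d(y,v)\}.
$$
As a minimum of $1$-Lipschitz functions, $g$ is $1$-Lipschitz. We have $g(y)=f(y)-t$, so $g(x)-g(y)=1$. Also $g(x)=f(x)$, because $f(y)-t+1=f(x)$. Next I check which vertices in the relevant supports can decrease.
\begin{itemize}
\item The vertex $y$ itself decreases by exactly $t$.
\item Any $v$ with $d(y,v)=2$ (this includes every neighbour of $x$ not adjacent to $y$) would need $f(v)>f(y)-t+2=f(x)+1$. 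This contradicts the Lipschitz condition $f(v)\le f(x)+1$, so such $v$ do not decrease.
\item Neighbours of $y$ outside $S_1(x)\cup\{x\}$ carry only $m_y$-weight, so lowering $f$ there can only increase the objective.
\item A common neighbour $v\in N_{xy}$ that decreases by $s\ge 0$ changes the objective by $(1-\alpha)s\left(\frac{1}{d_y}-\frac{1}{d_x}\right)\ge 0$, using $d_x\ge d_y$.
\end{itemize}

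It remains to account for the vertex $y$. It appears with weight $\alpha$ in $\delta_y$ (a gain of $\alpha t$ when $f(y)$ is lowered) and with weight $\frac{1-\alpha}{d_x}$ in $m_x$ (a loss of $\frac{(1-\alpha)t}{d_x}$). The net change in the objective is therefore at least
$$
t\Big(\alpha-\frac{1-\alpha}{d_x}\Big),
$$
which is nonnegative exactly when $\alpha\ge\frac{1}{d_x+1}$. So $g$ is also optimal and satisfies $g(x)-g(y)=1$, which proves the claim.

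The main obstacle is the bookkeeping in this perturbation step. One must make sure that no vertex outside $\{y\}\cup N_{xy}\cup S_1(y)$ is affected in a harmful way, which is handled by the distance-$2$ observation. One must also make sure that $x$ is not lowered; this holds because $f(x)=f(y)-t+1$ exactly. If some subtlety arises there, for instance with vertices at distance $2$ from $y$ where $f$ sits right at the boundary value, I would first verify the inequality $f(v)\le f(x)+1$ directly for $v\in S_1(x)$. I would also note that vertices outside $B_1(x)\cup B_1(y)$ contribute nothing to the objective, since both $\mu_x^\alpha$ and $\mu_y^\alpha$ vanish there.
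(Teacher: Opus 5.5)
The paper gives no proof of this statement. It is imported from Bourne et al.\ and used only through Theorem~\ref{relation}, so there is no in-paper argument to compare against. Judged on its own, your proof is correct and complete. The function $g=\min\{f,\,f(y)-t+d(y,\cdot)\}$ is $1$-Lipschitz, with $g(x)=f(x)$ and $g(x)-g(y)=1$. The dual objective then changes as follows:
\begin{itemize}
\item at $y$, by $t\bigl(\alpha-\frac{1-\alpha}{d_x}\bigr)$, which is exactly where the threshold $\frac{1}{d_x+1}$ comes from;
\item on $N_{xy}$, by a nonnegative amount, which is where $d_x\ge d_y$ enters;
\item on $S_1(y)\setminus B_1(x)$, by a nonnegative amount;
\item everywhere else, by zero.
\end{itemize}

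There are two small corrections. First, your second bullet is overstated. For a general $v$ with $d(y,v)=2$ you only know $d(x,v)\le 3$, so $f(v)\le f(x)+1$ can fail and $g(v)<f(v)$ can happen. This is harmless only because such $v$ with $d(x,v)\ge 2$ lie outside $B_1(x)\cup B_1(y)$ and carry no mass. State the bullet for $v\in S_1(x)\setminus B_1(y)$ only, as you anticipate at the end.

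Second, you do not need the supremum to be attained. The comparison $f\mapsto g$ works for every $1$-Lipschitz $f$ with $f(x)-f(y)<1$, not just optimal ones. So the supremum over all $f$ equals the supremum over $\{f(x)-f(y)=1\}$ directly, and the compactness remark can be dropped.

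A useful by-product is $\kappa_\alpha(x,y)=(1-\alpha)(1-C)$ on the whole interval. This gives Theorem~\ref{relation} and the identity $\kappa_{\mathrm{LLY}}(x,y)=1-C$ immediately, without the mean value theorem.
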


As a direct consequence of the mean value theorem, they derived the following relation between the $\alpha$-Ollivier-Ricci curvature and its modification by Lin, Lu, and Yau.

\begin{theorem}\label{relation}
Let $G = (V, E)$ be a locally finite graph and let $x, y \in V$, $xy \in E$ with $d_x \geq d_y$. Then
$$
\kappa_\alpha(x, y) = (1-\alpha)\kappa_{\mathrm{LLY}}(x, y)
$$
for $\alpha \in \left[ \frac{1}{d_x+1}, 1 \right]$.
\end{theorem}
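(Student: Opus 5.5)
The plan is to combine the linearity statement of \cite[Theorem 4.4]{Bourne} quoted just above with the value of $\kappa_\alpha$ at the endpoint $\alpha=1$, and then read off $\kappa_{\mathrm{LLY}}$ from the definition as a limit. The argument is purely one-variable calculus once these two inputs are in place.

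\textbf{Step 1 (endpoint value).} First compute $\kappa_1(x,y)$. For $\alpha=1$ the measures are $\mu_x^1=\mathbf{1}_{\{x\}}$ and $\mu_y^1=\mathbf{1}_{\{y\}}$. The only transport plan sends mass $1$ from $x$ to $y$, so $W(\mu_x^1,\mu_y^1)=d(x,y)=1$ since $xy\in E$. Hence $\kappa_1(x,y)=1-1=0$.

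\textbf{Step 2 (linear form).} Set $a=\frac{1}{d_x+1}$. Since $d_x\ge d_y$, the quoted theorem says $\alpha\mapsto\kappa_\alpha(x,y)$ is linear on $[a,1]$, so there are constants $A,B$ with $\kappa_\alpha(x,y)=A+B\alpha$ for all $\alpha\in[a,1]$. Evaluating at $\alpha=1$ and using Step 1 gives $A=-B$. Therefore, for every $\alpha\in[a,1]$,
$$\kappa_\alpha(x,y)=B(\alpha-1)=(1-\alpha)(-B).$$

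\textbf{Step 3 (identify the constant).} For $\alpha\in[a,1)$ the quotient $\frac{\kappa_\alpha(x,y)}{1-\alpha}$ equals the constant $-B$. Hence the limit defining the Lin--Lu--Yau curvature exists and $\kappa_{\mathrm{LLY}}(x,y)=-B$. Substituting into Step 2 gives $\kappa_\alpha(x,y)=(1-\alpha)\kappa_{\mathrm{LLY}}(x,y)$ on $[a,1)$. At $\alpha=1$ both sides are $0$, so the identity holds on all of $[a,1]$.

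Equivalently, in the language of the mean value theorem, $-B$ is the (constant) left derivative of $\alpha\mapsto-\kappa_\alpha(x,y)$ near $1$. There is no real obstacle here. The only point requiring care is that linearity must hold on a closed interval containing $1$, so that the endpoint value from Step 1 can be used to pin down $A$. This is exactly what the quoted theorem provides, together with continuity of $\alpha\mapsto\kappa_\alpha$, which follows from its concavity on $[0,1]$ established by Lin, Lu and Yau.
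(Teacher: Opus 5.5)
Your proposal is correct and follows the same route the paper indicates. The paper attributes the result to Bourne et al.\ as ``a direct consequence of the mean value theorem'': linearity of $\alpha\mapsto\kappa_\alpha(x,y)$ on $[\frac{1}{d_x+1},1]$, together with $\kappa_1(x,y)=0$, forces $\kappa_\alpha(x,y)=(1-\alpha)\cdot(\text{slope})$, and the limit definition identifies that constant with $\kappa_{\mathrm{LLY}}(x,y)$, exactly as in your Steps 1--3. One small correction to your closing remark: concavity on $[0,1]$ does not by itself give continuity at the endpoint $\alpha=1$. You don't need continuity there anyway, since the cited linearity already holds on the closed interval. Continuity is in fact true, because $|\kappa_\alpha(x,y)-\kappa_\beta(x,y)|\le 2|\alpha-\beta|$ by the triangle inequality for $W$.
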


Loisel and Romon had previously established this relation in \cite{Loisel} for $\alpha \geq \frac{1}{2}$ in 2014.
Consequently, for large values of $\alpha$, the Lin--Lu--Yau curvature aligns with the $\alpha$-Ollivier-Ricci curvature up to a scaling factor.

In the case of regular graphs, the optimal transport problem can be reduced to an optimal assignment problem between subsets of the spheres. This observation was leveraged in \cite{Hehl} to derive a more concise expression for the Lin--Lu--Yau curvature.

\begin{theorem}[\cite{Hehl}, Theorem 1.1]\label{equal degree}
Let $G$ be a locally finite graph. Let $x$ and $y$ be two adjacent vertices of equal degree $d$. Then the Lin--Lu--Yau curvature is as follows:
$$
\kappa_{\mathrm{LLY}}(x, y) = \frac{1}{d} \left( d + 1 - \min_{\phi: N_x \to N_y} \sum_{v \in N_x} d(v, \phi(v)) \right),
$$
where the minimum is taken over all the bijections $\phi$ from $N_x := S_1(x) \setminus B_1(y)$ to $N_y := S_1(y) \setminus B_1(x)$.
\end{theorem}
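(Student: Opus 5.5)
We would prove Theorem~\ref{equal degree} by choosing one concrete idleness at which $\kappa_\alpha$ equals a rescaled $\kappa_{\mathrm{LLY}}$, and then computing $W(\mu_x^\alpha,\mu_y^\alpha)$ exactly. Since $d_x=d_y=d$, Theorem~\ref{relation} applies with $\alpha=\frac{1}{d+1}$, giving
$$
\kappa_{\mathrm{LLY}}(x,y)=\frac{d+1}{d}\,\kappa_{\frac{1}{d+1}}(x,y)=\frac{d+1}{d}\Bigl(1-W\bigl(\mu_x^{\frac{1}{d+1}},\mu_y^{\frac{1}{d+1}}\bigr)\Bigr).
$$
The point of this choice is that $\mu_x:=\mu_x^{1/(d+1)}$ is the uniform measure $\frac{1}{d+1}$ on $B_1(x)$, and likewise $\mu_y$ is uniform on $B_1(y)$. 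So it remains to show
$$
W(\mu_x,\mu_y)=\frac{1}{d+1}\min_{\phi}\sum_{v\in N_x} d(v,\phi(v)),
$$
since substituting this gives exactly the claimed formula.

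Next we identify where the two measures agree and where they differ.
\begin{itemize}
\item On $B_1(x)\cap B_1(y)=\{x,y\}\cup N_{xy}$ they agree, with common value $\frac{1}{d+1}$.
\item On $N_x=S_1(x)\setminus B_1(y)$ only $\mu_x$ is positive, and on $N_y$ only $\mu_y$ is positive.
\item Counting gives $|N_x|=d-1-|N_{xy}|=|N_y|$, so a bijection $\phi\colon N_x\to N_y$ exists.
\end{itemize}
By Lemma~\ref{simple transport plan}, there is an optimal plan $\pi$ with $\pi(u,u)=\min\{\mu_x(u),\mu_y(u)\}$ for every $u$. For $u$ in the common part, this fixes all of $u$'s mass as outgoing and all of its mass as incoming. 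Hence the row marginal and column marginal at such $u$ are exhausted by the diagonal, so $\pi(u,w)=\pi(w,u)=0$ for $w\neq u$. The remaining part of $\pi$ is therefore a transport plan from the uniform measure $\frac{1}{d+1}$ on $N_x$ to the uniform measure $\frac{1}{d+1}$ on $N_y$. Moreover, $W(\mu_x,\mu_y)$ equals the optimal cost of this reduced problem.

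The main step is to show that the reduced problem is solved by a bijection. After scaling by $d+1$, a feasible plan for the reduced problem is a $k\times k$ doubly stochastic matrix, where $k=|N_x|$. The cost $\sum_{v,w} d(v,w)\pi(v,w)$ is linear in this matrix. By the Birkhoff--von Neumann theorem, the doubly stochastic matrices form a polytope whose vertices are the permutation matrices. A linear function on this polytope attains its minimum at a vertex, so the minimum is attained at a permutation matrix, i.e.\ at some bijection $\phi\colon N_x\to N_y$. That vertex has cost $\frac{1}{d+1}\sum_{v} d(v,\phi(v))$. Conversely, every bijection gives a feasible plan, so the infimum is exactly the minimum over bijections.

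For the degenerate case $N_x=\emptyset$, we have $W=0$ and the empty sum is $0$, so the formula still holds. We expect the only delicate point to be the reduction in the second step: we must be sure that fixing the diagonal forces zero off-diagonal mass on the shared vertices. This follows directly from the marginal constraints, since the diagonal entry already uses up both the full outgoing and full incoming mass at each shared vertex.
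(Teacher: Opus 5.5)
Your proof is correct. The paper gives no proof of its own here; it quotes the result from Hehl and only remarks that for equal degrees the transport problem reduces to an optimal assignment between subsets of the spheres. Your argument carries out exactly that reduction: Theorem~\ref{relation} at $\alpha=\frac{1}{d+1}$, uniform measures on the balls, the shared mass on $\{x,y\}\cup N_{xy}$ left in place, and Birkhoff--von Neumann to reduce to bijections $N_x\to N_y$.

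One step is only asserted: that $W(\mu_x,\mu_y)$ is at most the reduced optimum. It follows by extending any reduced plan with the identity on the shared vertices, so it is worth one line.
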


We define the lower Lin--Lu--Yau curvature bound of a graph at various scales as follows.

\begin{defi}
Let $G = (V, E)$ be a locally finite graph. For any positive integer $i$, we define the lower Lin--Lu--Yau curvature bound of $G$ at scale $i$ as follows:
$$
\kappa_{\mathrm{LLY}}^{(i)}(G) := \inf_{\substack{x, y \in V \\ d(x, y) = i}} \kappa_{\mathrm{LLY}}(x, y).
$$
\end{defi} 

When $i = 1$, we simply write $\kappa_{\mathrm{LLY}}(G)$ instead of $\kappa_{\mathrm{LLY}}^{(1)}(G)$. The following lemma is a reformulation of \cite[Lemma 2.3]{LLY}, which is proved by the triangle inequality of Wasserstein distance.

\begin{lemma}\label{triangle inequality}
Let $G$ be a locally finite graph. For any positive integer $i$, it holds that
$$
\kappa_{\mathrm{LLY}}^{(i)}(G) \geq \kappa_{\mathrm{LLY}}(G).
$$
\end{lemma}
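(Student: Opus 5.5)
Fix $x,y\in V$ with $d(x,y)=i$; it suffices to show $\kappa_{\mathrm{LLY}}(x,y)\ge \kappa_{\mathrm{LLY}}(G)$ and then take the infimum over all such pairs. The plan is to compare the transport from $\mu_x^\alpha$ to $\mu_y^\alpha$ with a chain of edge transports along a shortest path, via the triangle inequality for $W$. We then pass to the Lin--Lu--Yau limit $\alpha\to1$.

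\textbf{Step 1: chain along a geodesic.} Choose a shortest path $x=x_0,x_1,\dots,x_i=y$, so that $x_{j}x_{j+1}\in E$ for every $j$. Since $W$ is a metric on probability measures, for every $\alpha\in[0,1)$ we have
$$
W(\mu_x^\alpha,\mu_y^\alpha)\le \sum_{j=0}^{i-1} W(\mu_{x_j}^\alpha,\mu_{x_{j+1}}^\alpha)=\sum_{j=0}^{i-1}\bigl(1-\kappa_\alpha(x_j,x_{j+1})\bigr),
$$
where the equality uses $d(x_j,x_{j+1})=1$ in the definition of $\kappa_\alpha$. Dividing by $d(x,y)=i$ gives
$$
\kappa_\alpha(x,y)=1-\frac{W(\mu_x^\alpha,\mu_y^\alpha)}{i}\ge \frac1i\sum_{j=0}^{i-1}\kappa_\alpha(x_j,x_{j+1}).
$$

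\textbf{Step 2: pass to the limit.} Divide this inequality by $1-\alpha>0$ and let $\alpha\to1$. By the result of Lin, Lu and Yau recalled before the definition of the Lin--Lu--Yau curvature, the limit $\lim_{\alpha\to1}\kappa_\alpha(u,v)/(1-\alpha)$ exists for every pair of distinct vertices. Since the sum is finite, the limits may be taken termwise, giving
$$
\kappa_{\mathrm{LLY}}(x,y)\ge\frac1i\sum_{j=0}^{i-1}\kappa_{\mathrm{LLY}}(x_j,x_{j+1})\ge \kappa_{\mathrm{LLY}}(G).
$$
The last inequality holds because each $x_jx_{j+1}$ is an edge, so each term is at least $\kappa_{\mathrm{LLY}}(G)=\kappa^{(1)}_{\mathrm{LLY}}(G)$. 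Taking the infimum over all pairs at distance $i$ yields the lemma. If no such pair exists, the infimum is $+\infty$ and the statement is trivial.

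\textbf{Main obstacle.} There is no real difficulty; the only point requiring care is Step 2. One needs the existence of the limit defining $\kappa_{\mathrm{LLY}}(x,y)$ for the non-adjacent pair $(x,y)$, and not only for edges. This is part of the Lin--Lu--Yau result on the monotonicity and boundedness of $\kappa_\alpha/(1-\alpha)$ for arbitrary distinct vertices, which the paper assumes when defining $\kappa_{\mathrm{LLY}}(x,y)$ for all distinct $x,y$.
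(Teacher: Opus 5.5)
Your proof is correct and follows the same route the paper indicates. The paper cites the lemma as a reformulation of Lin, Lu and Yau's Lemma 2.3, proved by the triangle inequality for the Wasserstein distance along a shortest path and then passing to the limit $\alpha\to1$ after dividing by $1-\alpha$; your check that $\lim_{\alpha\to1}\kappa_\alpha(x,y)/(1-\alpha)$ also exists for non-adjacent pairs is the right justification for taking the limit termwise.
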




Hehl established a lower bound on the minimum degree that guarantees non-negative Lin--Lu--Yau curvature. He also proceed to discuss the sharpness of the bound.

\begin{theorem}[\cite{Hehlmin}, Theorem 1.1]\label{Hehlmin}
Let $G$ be a connected graph with $n$ vertices and minimum degree $\delta(G)$. If$$\delta(G)\geq \frac{2n}{3}-1,$$ then $\kappa_{\mathrm{LLY}}(x,y)\geq0$ for all edges $xy$ in $G$.
\end{theorem}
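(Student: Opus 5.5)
The plan is to work with the $\alpha$-Ollivier curvature at a fixed idleness and then pass to $\kappa_{\mathrm{LLY}}$ via Theorem~\ref{relation}. Fix an edge $xy$ with $d_x\ge d_y$ and take $\alpha=\frac{1}{d_x+1}$. Then $\mu_x^\alpha$ is uniform with mass $\frac{1}{d_x+1}$ on $B_1(x)$. The measure $\mu_y^\alpha$ puts mass $\frac{1}{d_x+1}$ on $y$ and $\frac{d_x}{(d_x+1)d_y}\ge\frac{1}{d_x+1}$ on each neighbour of $y$. By Theorem~\ref{relation} it suffices to show $\kappa_\alpha(x,y)\ge 0$, i.e. to exhibit a transport plan with $\nu_0\ge \nu_2+2\nu_3$.

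\emph{First step: the diameter is at most $2$.} For any two vertices $u,v$,
\[
|S_1(u)\cap S_1(v)|\ \ge\ 2\delta(G)-(n-2)\ \ge\ \tfrac{n}{3}\ >\ 0,
\]
so $\mathrm{diam}(G)\le 2$. Hence $\nu_3=0$ for every plan, and the target reduces to $\nu_2\le \nu_0$.

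\emph{Second step: the shared mass.} By Lemma~\ref{simple transport plan} we may keep in place all mass on $x$, on $y$ and on the common neighbours $N_{xy}$. Writing $t=|N_{xy}|$, this gives
\[
\nu_0\ \ge\ \frac{2+t}{d_x+1}.
\]
The mass still to be moved is $\frac{1}{d_x+1}$ on each of the $d_x-1-t$ vertices of $N_x=S_1(x)\setminus B_1(y)$. Its destinations are the residual excess on $S_1(y)$, which includes the whole mass on $N_y$ plus the surplus on $N_{xy}\cup\{x\}$. Every unit moved along an edge costs distance $1$ and does not hurt. It therefore suffices to send all but at most $\frac{2+t}{d_x+1}$ of this mass along edges; the rest goes at distance $2$.

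\emph{Third step: routing along edges.} Set up a bipartite (fractional) flow problem. Each $u\in N_x$ is a source of capacity $\frac{1}{d_x+1}$ and each $w\in S_1(y)\setminus\{y\}$ a sink with its residual capacity, with an arc $u\to w$ whenever $uw\in E$ or $u=w$. By the max-flow/min-cut theorem (equivalently a fractional Hall/König condition), the deficiency of this flow is controlled by subsets $A\subseteq N_x$ whose neighbourhood inside $S_1(y)$ is small.

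The degree hypothesis enters through the estimate
\[
|S_1(u)\cap S_1(y)|\ \ge\ d_u+d_y-(n-2)\ \ge\ \tfrac{n}{3}
\]
for each $u\in N_x$, combined with $|S_1(y)|=d_y\le n-1$ and $d_x\ge\frac{2n}{3}-1$. From these I would show that every cut has deficiency at most $(2+t)$ units of $\frac{1}{d_x+1}$. This would give $\nu_2\le\nu_0$, and hence $\kappa_\alpha(x,y)\ge0$ and $\kappa_{\mathrm{LLY}}(x,y)\ge0$.

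The main obstacle is this Hall-type counting. The sink capacities on $S_1(y)$ are non-uniform when $d_x>d_y$, so the plain matching argument available for equal degrees (Theorem~\ref{equal degree}) does not apply directly. I would first treat the regular-degree case via Theorem~\ref{equal degree}, where the minimal assignment must have total cost at most $d+1$. For the general case I would then split the extra capacity $\frac{d_x-d_y}{(d_x+1)d_y}$ on each neighbour of $y$ and argue that it only enlarges the feasible flows. The constant $\frac{2n}{3}-1$ should appear exactly as the threshold at which the worst cut, namely $A=N_x$ with neighbourhood of size about $\frac{n}{3}$, still has deficiency at most $2+t$.
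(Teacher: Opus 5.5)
Your framework is correct and matches the computation in the paper's proof of Theorem~\ref{n bound}, which gives this statement (even with strict inequality, as in the corollary following Theorem~\ref{n bound sharpness}). You fix $\alpha=\frac{1}{d_x+1}$ with $d_x\ge d_y$, get $\nu_3=0$ from $\mathrm{diam}(G)\le 2$ (your first step is Lemma~\ref{diam}), keep the shared mass so that $\nu_0=\frac{t+2}{d_x+1}$, and reduce to $\nu_2\le\nu_0$. However, your third step is not a proof. You only say you ``would show'' that every cut has deficiency at most $t+2$ units. The max-flow/Hall plan, the regular case via Theorem~\ref{equal degree}, and the splitting of excess capacity on $S_1(y)$ are never carried out. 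So the one inequality that actually uses the hypothesis is missing.

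The missing observation is that no routing along edges is needed. The mass that must move is $1-\nu_0=\frac{d_x-1-t}{d_x+1}$. Since $\mathrm{diam}(G)\le 2$, any coupling of the residual masses costs at most $2$ per unit. Hence $\nu_2\le 1-\nu_0$ and $\kappa_\alpha(x,y)\ge 2\nu_0-1=\frac{2t+3-d_x}{d_x+1}$. So $\nu_2\le\nu_0$ holds as soon as $|N_x|=d_x-1-t\le t+2$. In your language, every cut trivially has deficiency at most the total source, and the total source is already at most $t+2$ units.

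The degree hypothesis enters through the common neighbourhood of the edge $xy$, not through $|S_1(u)\cap S_1(y)|$ for $u\in N_x$. Since $S_1(x)\cup S_1(y)\subseteq V$, we have $t\ge d_x+d_y-n$, and so
\[
2t+3-d_x\ \ge\ d_x+2d_y+3-2n\ \ge\ 3\delta(G)+3-2n\ \ge\ 0 .
\]
Theorem~\ref{relation} then gives $\kappa_{\mathrm{LLY}}(x,y)\ge\frac{2t+3-d_x}{d_x}\ge 0$. In your regular case, the same crude bound $d(v,\phi(v))\le 2$ in Theorem~\ref{equal degree} gives this in one line.

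Your heuristic that the threshold $\frac{2n}{3}-1$ comes from a worst cut $A=N_x$ with about $\frac n3$ neighbours in $S_1(y)$ is misdirected. The threshold is exactly the count $|N_x|\le |N_{xy}|+2$. Indeed, in the extremal graphs of Theorem~\ref{Hehlmin sharpness} the crude bound is attained. There the only neighbours of $N_x$ in $S_1(y)$ are $x$ and $N_{xy}$, which carry no residual capacity because $d_x=d_y$, so all the mass on $N_x$ travels distance $2$.
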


\begin{theorem}[\cite{Hehlmin}, Theorem 1.2]\label{Hehlmin sharpness}
Let $n \in \mathbb{N}$ such that $2n/3 \in 2\mathbb{N}$ and $2n/3 \geq 6$. Then, there exists a graph $G$ with $n$ vertices and minimum degree $$\delta(G) = \frac{2n}{3} - 2,$$ that contains an edge $xy$ with $\kappa_{\mathrm{LLY}}(x, y) < 0$.
\end{theorem}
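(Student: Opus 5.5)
The plan is an explicit construction: a graph in which one edge $xy$ joins two vertices of equal degree and the curvature can be read off from Theorem \ref{equal degree}. Write $n=3k$. The hypothesis says exactly that $k\in\mathbb{N}$ and $k\geq 3$. The target minimum degree is $\delta=2k-2$.

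\textbf{Counting.} The key constraint is vertex counting. Suppose $d_x=d_y=2k-2$ and $t=|N_{xy}|$. Then $|N_x|=|N_y|=2k-3-t$ in the notation of Theorem \ref{equal degree}, so we need $2+t+2(2k-3-t)\le 3k$, i.e. $t\ge k-4$. Theorem \ref{equal degree} gives negative curvature as soon as every bijection $N_x\to N_y$ costs more than $d+1=2k-1$. If all the relevant distances equal $2$, the cost is $2|N_x|$. I would therefore take $t=k-3$, so $|N_x|=|N_y|=k$ and the cost is $2k>2k-1$, and spend the one leftover vertex to fix degrees.

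\textbf{Construction.} Take vertex classes
$$V=\{x,y,z\}\cup A\cup B\cup C,\qquad |A|=|B|=k,\quad |C|=k-3,$$
so that $|V|=3k=n$. The edges are:
\begin{itemize}
\item $xy$;
\item $x$ joined to all of $A\cup C$, and $y$ joined to all of $B\cup C$;
\item $A$ and $B$ each a clique;
\item every vertex of $C$ joined to every vertex of $A\cup B$;
\item $z$ joined to every vertex of $A\cup B$.
\end{itemize}
There are no edges between $A$ and $B$. The vertices of $C$ may also be made pairwise adjacent; this is harmless. Thus $N_{xy}=C$, $N_x=A$ and $N_y=B$.

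\textbf{Degrees.} We check each class.
\begin{itemize}
\item $d_x=d_y=1+k+(k-3)=2k-2$.
\item A vertex of $A$ has neighbours $x$, $k-1$ vertices of $A$, $k-3$ vertices of $C$, and $z$, so its degree is $2k-2$. The same holds for $B$ by symmetry.
\item Vertices of $C$ have degree at least $2+2k$.
\item $z$ has degree $2k\ge 2k-2$.
\end{itemize}
Hence $\delta(G)=2k-2=\frac{2n}{3}-2$. The graph is clearly connected.

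\textbf{Curvature.} For $a\in A$ and $b\in B$ we have $ab\notin E$ and the path $a z b$, so $d(a,b)=2$. Consequently every bijection $\phi:A\to B$ has cost exactly $2k$. Theorem \ref{equal degree} then gives
$$\kappa_{\mathrm{LLY}}(x,y)=\frac{1}{2k-2}\bigl(2k-1-2k\bigr)=-\frac{1}{2k-2}<0.$$

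\textbf{Main obstacle.} The delicate step is the balancing in the counting above. With $t=k-4$, the vertices of $A$ would be one neighbour short of degree $2k-2$. Any direct $A$–$B$ edges added to compensate would create distance-$1$ pairs and ruin the cost estimate. The auxiliary vertex $z$ resolves this: it raises the degrees of $A\cup B$ by one while keeping all $A$–$B$ distances equal to $2$.

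The remaining checks are routine: that no distance between $A$ and $B$ drops to $1$, and that Theorem \ref{equal degree} applies, which it does since $d_x=d_y$.
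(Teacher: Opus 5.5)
Your proposal is correct and follows essentially the same route as the paper. The paper cites this result from Hehl without proof, but the graph it builds in the proof of Theorem \ref{n bound sharpness} is exactly yours (with $k=l+1$, $A=X$, $B=Y$, $C=Z$, $z=v$), with the same degree count and the same application of Theorem \ref{equal degree} yielding $\kappa_{\mathrm{LLY}}(x,y)=-\frac{1}{2k-2}<0$.
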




\section{Positive Curvature Implies Large Edge-Connectivity}\label{section}



In this section, we prove sharp lower bounds of edge-connectivity based on Lin--Lu--Yau curvature. Our results show that a large positive lower bound of Lin--Lu--Yau curvature on each edge (or between any two vertices at distance three) implies large edge-connectivity. Before our proof, we present s basic lemma.

\begin{lemma} \label{neighbor}
Let $G$ be a non-complete connected graph with edge-connectivity $\kappa'(G)$. Let $S \subset E$ be an edge set of size $|S| = \kappa'(G)$ such that $G - S$ is disconnected and $X$, $Y$ be two distinct connected components of $G - S$. Let $X_1:=X\cap V(S),\ X_2:=X \setminus X_1,\ Y_1:=Y\cap V(S),\ Y_2:=Y \setminus Y_1$. Then, for any edge $uw \in S$ with $u\in X_1$ and $w\in Y_1$, if $X_2\ne \emptyset$, $u$ has at least one neighbor in $X_2$; if $Y_2\ne \emptyset$, $w$ has at least one neighbor in $Y_2$.
\end{lemma}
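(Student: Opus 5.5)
We argue by contradiction, using only the minimality of $S$. The key observation is that since $|S|=\kappa'(G)$ is the minimum size of a disconnecting edge set, $G-S$ has exactly two components, namely $X$ and $Y$, and every edge of $S$ joins $X$ to $Y$. Indeed, if some edge $e\in S$ had both ends in one component, or joined two components not covering a global bipartition, then $S\setminus\{e\}$ would still disconnect $G$, contradicting minimality. Concretely, the components of $G-S$ together with the edges of $S$ form a connected multigraph; if there were at least three components, the edges between $X$ and the rest already disconnect $G$ and form a proper subset of $S$. So $V=X\cup Y$ and $S=E(X,Y)$.

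Now fix $uw\in S$ with $u\in X_1$, $w\in Y_1$, and suppose $X_2\neq\emptyset$ but $u$ has no neighbor in $X_2$. All neighbors of $u$ then lie in $X_1\cup Y$. We count $|S|$ using a cheaper cut. Since $X$ is connected and $X_2\neq\emptyset$, $X_1$ is a proper nonempty subset of $X$. Consider first the trivial bound from degrees. Each vertex of $X_1$ is incident to at least one edge of $S$, so $|X_1|\le |S|$.

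The most natural competing cut is $E(\{u\},V\setminus\{u\})$, of size $d_u$. If $d_u<|S|$ we contradict minimality, so $d_u\ge |S|$. On the other hand, $u$'s neighbors lie in $(X_1\setminus\{u\})\cup Y_1$. Hence
\[
d_u\le (|X_1|-1)+|E(u,Y)|\le (|X_1|-1)+\bigl(|S|-(|X_1|-1)\bigr)=|S|,
\]
where the second step uses that each of the other $|X_1|-1$ vertices of $X_1$ consumes at least one edge of $S$ distinct from those at $u$. So $d_u\le |S|$, giving equality throughout: every vertex of $X_1\setminus\{u\}$ is adjacent to $u$ and has exactly one $S$-edge.

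To reach a contradiction, consider the cut $E(X_2, V\setminus X_2)=E(X_2,X_1)$. Each vertex $v\in X_1\setminus\{u\}$ has degree at least $|S|$ (by the degree cut) and at most $|X_1|-1$ neighbors inside $X_1$, together with exactly one $S$-edge. Thus $v$ sends at least $|S|-|X_1|$ edges to $X_2$, and $u$ sends none. This gives $|E(X_2,X_1)|\ge (|X_1|-1)(|S|-|X_1|)$, which is the wrong direction for a contradiction.

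The better approach is to look inside $X$ directly. I expect this to be the main obstacle.

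Take $Z:=X_2$. Its boundary consists only of edges to $X_1\setminus\{u\}$, and this boundary is nonempty since $X$ is connected. Pick $z\in X_2$. Then $d_z\ge|S|$, and all neighbors of $z$ lie in $X\setminus\{u\}$, so $|X|-2\ge |S|$. Hence $|X|\ge|S|+2$.

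Now compare with the cut $E(X\setminus\{u\},\ \{u\}\cup Y)$. Its size is $|E(X_1\setminus\{u\},u)|+|S|-|E(u,Y)|$. By the equalities above, this equals $(|X_1|-1)+|S|-(|S|-|X_1|+1)=2|X_1|-2$. Minimality then requires $2|X_1|-2\ge|S|$, while $|X_1|\le|S|$. So no immediate contradiction arises in general.

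Since $u$ itself causes no contradiction, I would instead switch to a more suitable choice: if $u$ had no neighbor in $X_2$, move $u$ to the $Y$-side. The cut $E(X\setminus\{u\}, Y\cup\{u\})$ has size $|S|-|E(u,Y)|+|E(u,X_1)|$. Since $d_u\ge|S|\ge$ the degree bound, I would try to show $|E(u,Y)|>|E(u,X_1)|$ via a refined double count, possibly exploiting non-completeness of $G$. This comparison is where I expect the real difficulty. The claim for $w$ then follows by symmetry.
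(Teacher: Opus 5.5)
\textbf{Verdict: genuine gap, and it cannot be closed, because the lemma is false as stated.}

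Your proposal never reaches a contradiction. The facts you establish are correct:
\begin{itemize}
\item $G-S$ has exactly the two components $X,Y$, and $S=E(X,Y)$;
\item under the hypothesis, $d_u=|S|$, $u$ is adjacent to all of $X_1\setminus\{u\}$, and each other vertex of $X_1$ carries exactly one $S$-edge;
\item $|E(X\setminus\{u\},\{u\}\cup Y)|=2|X_1|-2$.
\end{itemize}
But the decisive inequality $|E(u,Y)|>|E(u,X_1)|$ is only hoped for, and it cannot be proved.

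\emph{Counterexample.} Let $V=\{u,u',a,b,y_1,y_2\}$ with edges $uu',uy_1,u'y_2,u'a,u'b,ab,y_1y_2$. Every edge lies on the $4$-cycle $u\,u'\,y_2\,y_1$ or on the triangle $u'ab$, so $G$ is $2$-edge-connected. Since $d_u=2$, $G$ is a connected non-complete graph with $\kappa'(G)=2$. For $S=\{uy_1,u'y_2\}$, the graph $G-S$ has components $X=\{u,u',a,b\}$ and $Y=\{y_1,y_2\}$. Thus $X_1=\{u,u'\}$ and $X_2=\{a,b\}\neq\emptyset$, yet for the edge $uy_1\in S$ the vertex $u$ has only the neighbors $u'$ and $y_1$.

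This is precisely your equality configuration, with $|E(u,Y)|=|E(u,X_1)|=1$. So $E(X\setminus\{u\},\{u\}\cup Y)$ is simply another minimum cut, not a contradiction. Now replace $Y$ by a mirror copy $\{w,w',c,d\}$ of $X$ (edges $ww',w'c,w'd,cd$) and take $S=\{uw,u'w'\}$. This gives an example with $X_2,Y_2$ both nonempty in which the conclusion fails at both ends of $uw$.

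\emph{The paper's proof has the same hole.} Its one-line argument asserts that $G-(S\setminus\{uw\})$ is disconnected. But, exactly as you observed, $G-S$ has only the components $X\ni u$ and $Y\ni w$. Restoring $uw$ therefore reconnects $G$, no matter where the other neighbors of $u$ lie.

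\emph{What is true} is the existential version. Since $X$ is connected and $X_1,X_2\neq\emptyset$, \emph{some} vertex $u\in X_1$, lying on some edge $uw\in S$, has a neighbor in $X_2$. That suffices for Case~1 of Theorem~\ref{diam3} if $u$ is chosen this way. It does not suffice for Case~2, which needs a single edge of $S$ both of whose ends have such inner neighbors. In the mirrored example with the triangle attached at $u$ instead of $u'$, no edge of $S$ has this property.

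So rather than searching for a refined double count, weaken the lemma to the existential form. Case~2 then requires a separate argument.
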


\begin{proof}
Without loss of generality, assume that $X_2\ne\emptyset$. Suppose that $u\in X_1$ has no neighbors in $X_2$. Then $G - (S \setminus \{uw\})$ is disconnected, which is contradictory to the minimality of $S$.
\end{proof}


\begin{proof}[\bf{Proof of Theorem \ref{diam3}}]
Let $G=(V,E)$ be a non-complete connected graph with edge-connectivity $\kappa'(G)$. Let $S \subset E$ be a edge set of size $|S| = \kappa'(G)$ such that $G - S$ is disconnected. Let $X$ and $Y$ be two distinct connected components of $G - S$.
Let us set
$$
X_1:=X\cap V(S),\ X_2:=X \setminus X_1,\ Y_1:=Y\cap V(S),\ Y_2:=Y \setminus Y_1.
$$
Since $\mathrm{diam}(G)\geq3$, at most one of $X_2$ and $Y_2$ is empty. We divide the discussion into two cases according to whether both $X_2$ and $Y_2$ are not empty.

\textbf{Case 1:} One of $X_2$ and $Y_2$ is empty.

Without loss of generality, assume that $X_2\ne\emptyset,\ Y_2=\emptyset$.
Let $u$ be a vertex in $X_1$. Let $x$ be a neighbor of $u$ in $X_2$ (by Lemma \ref{neighbor}, such $x$ must exist). Let $w$ be a neighbor of $u$ in $Y_1$. Then, we have the following:
\begin{align*} 
d(v, X_2) &= 1, \text{ for any } v \in S_1(x) \cap X_1;\\
d(v, X_2) &= 1, \text{ for any } v \in S_1(w) \cap X_1;\\
d(v, X_2) &= 2, \text{ for any } v \in S_1(w) \cap Y_1.
\end{align*}
Define the 1-Lipschitz function $f(v):=d(v, X_2)$. According to Kantorovich duality, we derive
\begin{align*}
W(\mu_w^p, \mu_x^p) &= \sup_{\substack{f: V \to \mathbb{R} \\ 1-\text{Lipschitz}}} \sum_{v \in V} f(v) \left( \mu_w^p(v) - \mu_x^p(v) \right)\\
&\geq \sum_{v \in V} d(v, X_2) \mu_w^p(v) - \sum_{v \in V} d(v, X_2) \mu_x^p(v)\\
&= \frac{|S_1(w) \cap X_1|(1-p)}{d_w} + \left( p + \frac{|S_1(w) \cap Y_1|(1-p)}{d_w} \right) \times 2 - \frac{|S_1(x) \cap X_1|(1-p)}{d_x}\\
&= 2 - \frac{|S_1(w) \cap X_1|(1-p)}{d_w} - \frac{|S_1(x) \cap X_1|(1-p)}{d_x}\\
&\geq 2 - \frac{1-p}{\delta(G)} \left( |S_1(w) \cap X_1| + |S_1(x) \cap X_1| \right).
\end{align*}
In the above equality, we have employed the fact that $d_w = |S_1(w) \cap X_1| + |S_1(w) \cap Y_1|$.

By the definition of Lin--Lu--Yau curvature, we estimate
$$
\kappa_{\mathrm{LLY}}(w, x) = \lim_{p \to 1} \frac{1}{1-p} \left( 1 - \frac{W(\mu_w^p, \mu_x^p)}{d(x, w)} \right) \leq \frac{|S_1(w) \cap X_1| + |S_1(x) \cap X_1|}{2\delta(G)}.
$$
Therefore, we derive
\begin{align*}
\kappa'(G) &= |S| \geq \max\{|S_1(x) \cap X_1|, |S_1(w) \cap X_1|\}\\
&\geq \frac{1}{2} (|S_1(x) \cap X_1| + |S_1(w) \cap X_1|) \geq \delta(G) \kappa_{\mathrm{LLY}}^{(2)}(G).
\end{align*}
We complete the proof of Case 1.

\textbf{Case 2:} Both $X_2$ and $Y_2$ are not empty.

Let $uw \in S$ with $u\in X_1$ and $w\in Y_1$,  let $x$ be a neighbor of $u$ in $X_2$ and $y$ be a neighbor of $w$ in $Y_2$ (by Lemma \ref{neighbor}, such $x$ and $y$ must exist). Then, we have the following:
\begin{align*} 
d(v, X_2) &= 1, \text{ for any } v \in S_1(x) \cap X_1;\\
d(v, X_2) &= 2, \text{ for any } v \in S_1(y) \cap Y_1;\\
d(v, X_2) &\geq 3, \text{ for any } v \in S_1(y) \cap Y_2.
\end{align*}
Define the 1-Lipschitz function $f(v):=d(v, X_2)$. According to Kantorovich duality, we derive
\begin{align*}
W(\mu_y^p, \mu_x^p) &= \sup_{\substack{f: V \to \mathbb{R} \\ 1-\text{Lipschitz}}} \sum_{v \in V} f(v) \left( \mu_y^p(v) - \mu_x^p(v) \right)\\
&\geq \sum_{v \in V} d(v, X_2) \mu_y^p(v) - \sum_{v \in V} d(v, X_2) \mu_x^p(v)\\
&\geq \frac{|S_1(y) \cap Y_1|(1-p)}{d_y}\times 2 + \left( p + \frac{|S_1(y) \cap Y_2|(1-p)}{d_y} \right) \times 3 - \frac{|S_1(x) \cap X_1|(1-p)}{d_x}\\
&= \frac{|S_1(y) \cap Y_1|(2-2p)}{d_y} +  \frac{3p|S_1(y) \cap Y_1|+3|S_1(y) \cap Y_2|}{d_y} - \frac{|S_1(x) \cap X_1|(1-p)}{d_x}\\
&= \frac{3|S_1(y) \cap Y_1|+3|S_1(y) \cap Y_2|}{d_y} +  \frac{|S_1(y) \cap Y_1|(p-1)}{d_y} - \frac{|S_1(x) \cap X_1|(1-p)}{d_x}\\
&= 3 - \frac{|S_1(y) \cap Y_1|(1-p)}{d_y} - \frac{|S_1(x) \cap X_1|(1-p)}{d_x}\\
&\geq 3 - \frac{1-p}{\delta(G)} \left( |S_1(y) \cap Y_1| + |S_1(x) \cap X_1| \right).
\end{align*}
In the above equality, we have employed the fact that $d_y = |S_1(y) \cap Y_1| + |S_1(y) \cap Y_2|$.

By the definition of Lin--Lu--Yau curvature, we estimate
$$
\kappa_{\mathrm{LLY}}(y, x) = \lim_{p \to 1} \frac{1}{1-p} \left( 1 - \frac{W(\mu_y^p, \mu_x^p)}{d(x, y)} \right) \leq \frac{|S_1(y) \cap Y_1| + |S_1(x) \cap X_1|}{3\delta(G)}.
$$
Therefore, we derive
\begin{align*}
\kappa'(G) &= |S| \geq \max\{|S_1(x) \cap X_1|, |S_1(y) \cap Y_1|\}\\
&\geq \frac{1}{2} (|S_1(x) \cap X_1| + |S_1(y) \cap Y_1|) = \frac{3}{2} \times \frac{|S_1(x) \cap X_1| + |S_1(y) \cap Y_1|}{3}\\
&\geq \frac{3}{2}\delta(G) \kappa_{\mathrm{LLY}}^{(3)}(G).
\end{align*}
We complete the proof of Theorem \ref{diam3}.
\end{proof}

The next lemma characterized the structure of the infinite family of graphs for which Theorem \ref{diam3} is sharp.

\begin{lemma} \label{distance3}
Let $G_{2n}\cong K_n \cup K_n +M_2$ be the union of two copies of the complete graph $K_n$ ($n\geq3$) with a matching of size 2 between them. 
For any two vertices $x,y\in V(G_{2n})$ satisfying $d(x,y)=3$, we have
$\kappa_{\mathrm{LLY}}(x,y)=4/3(n-1).$
In particular, $\kappa_{\mathrm{LLY}}^{(3)}(G_{2n})=4/3(n-1).$
\end{lemma}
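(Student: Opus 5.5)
The plan is to compute $\kappa_{\mathrm{LLY}}(x,y)$ directly by matching an upper bound (from a Lipschitz test function) with a lower bound (from an explicit transport plan).

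\textbf{Setup.} Write $A$ and $B$ for the vertex sets of the two copies of $K_n$, and $a_1b_1$, $a_2b_2$ for the matching edges, with $a_i\in A$ and $b_i\in B$. First I will identify the pairs at distance $3$.
\begin{itemize}
\item Two vertices in the same copy are adjacent.
\item A vertex of $\{a_1,a_2\}$ is within distance $2$ of every vertex of $B$.
\item For $x\in A\setminus\{a_1,a_2\}$ and $y\in B\setminus\{b_1,b_2\}$, every $x$--$y$ path must use a matching edge, so $d(x,y)=3$.
\end{itemize}
Hence the pairs at distance $3$ are exactly those with $x\in A\setminus\{a_1,a_2\}$ and $y\in B\setminus\{b_1,b_2\}$ (up to swapping), and these exist because $n\ge 3$. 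Every vertex has degree $n-1$ or $n$; in particular $d_x=d_y=n-1$, and $\delta(G_{2n})=n-1$. Set $\varepsilon=\frac{1-p}{n-1}$.

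\textbf{Upper bound.} I will reuse Case 2 of the proof of Theorem \ref{diam3} verbatim, with
$$S=\{a_1b_1,a_2b_2\},\quad X_1=\{a_1,a_2\},\quad X_2=A\setminus\{a_1,a_2\},\quad Y_1=\{b_1,b_2\},\quad Y_2=B\setminus\{b_1,b_2\}.$$
Here $S$ is a minimum edge cut, and $x\in X_2$, $y\in Y_2$.
\begin{itemize}
\item The test function $f=d(\cdot,X_2)$ gives $W(\mu_y^p,\mu_x^p)\ge 3-\frac{1-p}{n-1}\bigl(|S_1(y)\cap Y_1|+|S_1(x)\cap X_1|\bigr)=3-4\varepsilon$.
\item The inequalities used there are equalities here, since $d_x=d_y=n-1$ exactly.
\item Dividing by $3$ and letting $p\to1$ gives $\kappa_{\mathrm{LLY}}(x,y)\le \frac{4}{3(n-1)}$.
\end{itemize}

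\textbf{Lower bound.} I will build an explicit transport plan from $\mu_x^p$ to $\mu_y^p$. Here $\mu_x^p$ puts mass $p$ at $x$ and $\varepsilon$ on each vertex of $A\setminus\{x\}$; $\mu_y^p$ is the analogous measure on $B$. The plan is:
\begin{itemize}
\item send the mass $\varepsilon$ at $a_1$ to $b_1$ and at $a_2$ to $b_2$, each at distance $1$;
\item pair the $n-3$ vertices of $A\setminus\{x,a_1,a_2\}$ bijectively with the $n-3$ vertices of $B\setminus\{y,b_1,b_2\}$ and move mass $\varepsilon$ along each pair, at distance $3$;
\item send the mass $p$ at $x$ to $y$, at distance $3$.
\end{itemize}
The marginals match, and the total cost is
$$3p+3(n-3)\varepsilon+2\varepsilon=3\bigl(p+(n-1)\varepsilon\bigr)-4\varepsilon=3-4\varepsilon.$$
Thus $\kappa_p(x,y)\ge \frac{4\varepsilon}{3}=\frac{4(1-p)}{3(n-1)}$ for every $p$, and dividing by $1-p$ and letting $p\to1$ gives $\kappa_{\mathrm{LLY}}(x,y)\ge\frac{4}{3(n-1)}$.

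\textbf{Conclusion.} Combining the two bounds gives $\kappa_{\mathrm{LLY}}(x,y)=\frac{4}{3(n-1)}$ for every pair at distance $3$. Taking the infimum over these pairs yields $\kappa^{(3)}_{\mathrm{LLY}}(G_{2n})=\frac{4}{3(n-1)}$. Equality in Theorem \ref{diam3} then follows, since $\kappa'(G_{2n})=2=\frac32(n-1)\cdot\frac{4}{3(n-1)}$. The only delicate points are checking the distance classification and confirming that the Case 2 estimates are tight here; both are routine.
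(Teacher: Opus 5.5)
Your proof is correct and follows the paper's argument: the same explicit transport plan gives $W(\mu_x^p,\mu_y^p)\le 3-\frac{4(1-p)}{n-1}$, and your test function $d(\cdot,X_2)$ equals $2-f$ for the paper's 1-Lipschitz $f$, so the Case 2 computation you borrow is exactly the paper's dual bound. The one difference is that you classify the distance-3 pairs explicitly instead of appealing to the automorphism orbit; this is a harmless, arguably cleaner, variation.
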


\begin{proof}
Without loss of generality, write $$V(G_{2n})=A\cup B=\{x,a_1,a_2,u_1,\ldots,u_{n-3}\}\cup \{y,b_1,b_2,v_1,\ldots,v_{n-3}\},$$ where the two edges of the matching are $a_1b_1$ and $a_2b_2$. 
Note that $d(x,y)=3$, $d(a_i,b_i)=1$ for $i=1,2$
and $d(u_i,v_j)=3$ for $i=1,\ldots,n-3$.
Since $d_x=d_y=n-1$, setting $q:=(1-p)/(n-1)$.

Consider the transport plan $\pi$ from $\mu_x^p$ to $\mu_y^p$ given by
\begin{align*} 
\pi(x,y)&=p,\\
\pi(a_1,b_1)&=\pi(a_2,b_2)=q,\\
\pi(u_i,v_i)&=q, \text{ for } i=1,\ldots,n-3.
\end{align*}
Its transport cost is
$$
\operatorname{Cost}(\pi)=3p+2q+3(n-3)q=3-\frac{4}{n-1}(1-p).
$$
Therefore
$$
W(\mu_x^p,\mu_y^p)\leq3-\frac{4}{n-1}(1-p).
$$

To prove optimality, define the 1-Lipschitz function $f:V(G_{2n})\to\mathbb{R}$ by 
$$
f(z)=
\begin{cases}
2, & z\in A\setminus\{a_1,a_2\};\\
1, & z\in\{a_1,a_2\};\\
0, & z\in\{b_1,b_2\};\\
-1, & z\in B\setminus\{b_1,b_2\}.
\end{cases}
$$
According to Kantorovich duality, we derive
\begin{align*}
W(\mu_x^p, \mu_y^p) &= \sup_{\substack{f: V(G_{2n}) \to \mathbb{R} \\ 1-\text{Lipschitz}}} \sum_{z \in V(G_{2n})} f(z) \left( \mu_x^p(z) - \mu_y^p(z) \right)\\
&\geq\sum_{z\in V(G_{2n})}f(z)\mu_x^p(z)
-\sum_{z\in V(G_{2n})}f(z)\mu_y^p(z)\\
&=\left(2p+2q+2(n-3)q\right)
-\left(-p-(n-3)q\right)\\
&=3p+(3n-7)q\\
&=3-\frac{4}{n-1}(1-p).
\end{align*}
Combining with the previous upper bound gives
$$
W(\mu_x^p,\mu_y^p)=3-\frac{4}{n-1}(1-p).
$$
By the definition of Lin--Lu--Yau curvature, we have
$$
\kappa_{\mathrm{LLY}}(x,y)=\lim_{p\to1}\frac{1}{1-p} \left( 1 - \frac{W(\mu_x^p, \mu_y^p)}{d(x, y)} \right)=\frac{4}{3(n-1)}.
$$
Since every pair of vertices at distance three in $G_{2n}$
belongs to the same automorphism orbit, it follows that
$$
\kappa_{\mathrm{LLY}}^{(3)}(G_{2n})=\frac{4}{3(n-1)}.
$$
We complete the proof of Lemma \ref{distance3}.
\end{proof}

\section{Large Edge-Connectivity Implies Large Curvature}

In this section, we give two sharp lower bounds on the edge-connectivity of a graph that guarantee positive Lin--Lu--Yau curvature. These results show that large edge-connectivity can guarantee large Lin--Lu--Yau curvature on graph, which complements the conclusion of Section \ref{section}. Before presenting our proof, we require the following lemma, which guarantees that the diameter of a graph is at most two when the minimum degree is sufficiently large.

\begin{lemma}\label{diam}  
Let $G$ be a graph on $n$ vertices. If the minimum degree $\delta(G)$ of $G$ satisfies
$$
\delta(G) \geq \frac{n-1}{2},
$$
then the diameter of $G$ satisfies $\mathrm{diam}(G)\leq 2$.
\end{lemma}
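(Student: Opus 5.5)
The plan is a direct pigeonhole argument on neighbourhoods. Take any two distinct vertices $x,y$ of $G$ and show $d(x,y)\le 2$.

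If $x$ and $y$ are adjacent, then $d(x,y)=1$ and there is nothing to prove. So assume $xy\notin E$. In that case neither $x$ nor $y$ lies in $S_1(x)\cup S_1(y)$, because the graph is simple (no loops) and $x,y$ are non-adjacent. Hence both neighbourhoods lie in the set $V\setminus\{x,y\}$, which has $n-2$ elements.

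If $S_1(x)\cap S_1(y)$ were empty, we would get
$$
d_x+d_y=|S_1(x)\cup S_1(y)|\le n-2 .
$$
The hypothesis, however, gives
$$
d_x+d_y\ge 2\delta(G)\ge n-1 .
$$
This is a contradiction. So $x$ and $y$ have a common neighbour, and therefore $d(x,y)=2$.

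In every case $d(x,y)\le 2$. This also shows that $G$ is connected, so $\mathrm{diam}(G)\le 2$ is meaningful.

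The only delicate point is the counting: we must exclude both $x$ and $y$ from the neighbourhoods. That exclusion is what turns the bound $n$ into $n-2$ and makes the threshold $\frac{n-1}{2}$ work. For the trivial case $n=1$ the statement holds vacuously.
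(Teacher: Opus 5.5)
Your proof is correct and is essentially the paper's argument. The paper argues by contradiction from a pair at distance greater than $2$, while you argue directly pair by pair, but the key count $n-1\le d_x+d_y=|S_1(x)\cup S_1(y)|\le n-2$ for non-adjacent $x,y$ with no common neighbour is the same, and your remark that this also yields connectedness is a small point the paper leaves implicit.
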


\begin{proof}
Assume that $\operatorname{diam}(G) > 2$. Hence, there exist vertices $x, y \in V$ such that $d(x, y) > 2$, and therefore $S_1(x) \cap S_1(y) = \emptyset$. As $x, y \notin S_1(x) \cup S_1(y)$, we have
$$
|S_1(x) \cup S_1(y)| \leq n - 2.
$$
Nevertheless, we have
$$
|S_1(x) \cup S_1(y)| = d_x + d_y \geq 2\delta(G) \geq n - 1.
$$
Therefore, our assumption was incorrect, and it follows that $\mathrm{diam}(G) \leq 2$ must hold.
\end{proof}

We also require the following lemma, which determines $\kappa'(G)$ when the minimum degree is sufficiently large.

\begin{lemma}\label{kappadelta}
If a graph satisfies $\delta(G)\geq (n-1)/2$, then
$\kappa'(G)=\delta(G)$.
\end{lemma}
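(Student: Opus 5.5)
We prove $\kappa'(G)=\delta(G)$ by showing the two inequalities separately. Whitney's bound gives $\kappa'(G)\le\delta(G)$ at once, so all the work is in the lower bound $\kappa'(G)\ge\delta(G)$. For that, take a minimum edge cut $S$ with $|S|=\kappa'(G)$. Removing $S$ splits $V$ into two nonempty parts $X$ and $Y$ with $E(X,Y)=S$. We may assume $|X|\le|Y|$, so $|X|\le n/2$.

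\textbf{Case $|X|\le\delta(G)$.} Each vertex $v\in X$ has at most $|X|-1$ neighbours inside $X$. Hence $v$ has at least $d_v-(|X|-1)\ge\delta(G)-|X|+1$ neighbours in $Y$. Summing over $v\in X$,
$$
|S|\ \ge\ |X|\bigl(\delta(G)-|X|+1\bigr).
$$
Put $g(t)=t(\delta(G)-t+1)$. This is a concave quadratic in $t$, and $g(1)=g(\delta(G))=\delta(G)$. Therefore $g(t)\ge\delta(G)$ for every integer $1\le t\le\delta(G)$, which gives $|S|\ge\delta(G)$.

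\textbf{Case $|X|>\delta(G)$, i.e. $|X|\ge\delta(G)+1$.} This case cannot occur. Since $|Y|\ge|X|$, we would get $n=|X|+|Y|\ge 2\delta(G)+2\ge (n-1)+2=n+1$, a contradiction. So the first case always applies, and $\kappa'(G)\ge\delta(G)$.

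Combining the two inequalities yields $\kappa'(G)=\delta(G)$.

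An alternative route is through Lemma \ref{diam}: it gives $\mathrm{diam}(G)\le 2$, and diameter $2$ is known to force $\kappa'=\delta$. The counting argument above is self-contained, so we use it instead.

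The one step needing care is making the quadratic bound hold over the whole range of possible $|X|$. The hypothesis $\delta(G)\ge(n-1)/2$ enters exactly in ruling out $|X|>\delta(G)$. Beyond that we only need the smaller side to be nonempty, which it is because $S$ is a cut; so there is no real obstacle.
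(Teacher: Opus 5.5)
Your proof is correct and follows essentially the same counting argument as the paper: you bound the cut from the smaller side by $s(\delta(G)-s+1)\ge\delta(G)$ using $s\le\delta(G)$, and the upper bound comes from isolating a minimum-degree vertex (Whitney). Your explicit derivation that $|X|\le\delta(G)$ follows from $\delta(G)\ge(n-1)/2$ fills in a step the paper only asserts.
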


\begin{proof}
Let $S$ be a non-empty proper vertex subset,
and let $s:=|S|\leq n/2$.
Every vertex of $S$ has at most $s-1$ neighbors inside $S$, hence at least $(\delta(G)-s+1)$ neighbors outside $S$. Therefore, 
$$
E(S,V-S)\geq s(\delta(G)-s+1).
$$
Since $s\le\delta(G)$,
$$
s(\delta(G)-s+1)\ge \delta(G).
$$

Thus every edge-cut has size at least $\delta(G)$.
On the other hand, removing all edges incident with a
minimum-degree vertex disconnects the graph and uses exactly $\delta(G)$ edges. Hence 
$$\kappa'(G)=\delta(G).$$
We complete the proof of Lemma \ref{kappadelta}.
\end{proof}


\begin{proof}[\bf{Proof of Theorem \ref{n bound}}]
Let $xy$ be an arbitrary edge in $G$ and assume, without loss of generality, that $d_x \leq d_y$. Choose
$$\alpha = \frac{1}{d_x + 1}.$$
Let $\pi$ be an optimal transport plan, transporting $\mu_x^\alpha$ to $\mu_y^\alpha$. Denote by $\nu_i^\alpha$ the mass transported with distance $i$ under $\pi$. According to Lemma \ref{diam} and the fact that $\delta(G)\geq \kappa'(G)$, we have $\nu_3^\alpha = 0$ and therefore
$$
\nu_0^\alpha + \nu_1^\alpha + \nu_2^\alpha = 1.
$$
Thus, we obtain
\begin{align*} 
\kappa_\alpha(x, y) &= 1 - W_1(\mu_x^\alpha, \mu_y^\alpha)\\
&= \nu_0^\alpha + \nu_1^\alpha + \nu_2^\alpha - \nu_1^\alpha - 2\nu_2^\alpha\\
&= \nu_0^\alpha - \nu_2^\alpha.
\end{align*}

According to Lemma \ref{simple transport plan}, we may assume
$$
\pi(z, z) = \min\{\mu_x^\alpha(z), \mu_y^\alpha(z)\}
$$
for every $z \in V$. Therefore, $\pi(z, z) = (1 - \alpha)/d_y$ for every $z \in N_{xy}$, $\pi(x, x) = (1 - \alpha)/d_y$, and $\pi(y, y) = (1 - \alpha)/d_x$. This yields
$$\nu_0^\alpha = \frac{1 - \alpha}{d_y} (|N_{xy}| + 1) + \frac{1 - \alpha}{d_x}.$$
Since $\nu_2^\alpha \leq 1 - \nu_0^\alpha$, we conclude
\begin{align*} 
\kappa_\alpha(x, y) &\geq 2\nu_0^\alpha - 1\\
&= 2(1 - \alpha) \left[ \frac{|N_{xy}| + 1}{d_y} + \frac{1}{d_x} - \frac{1}{2(1 - \alpha)} \right]\\
&= 2(1 - \alpha) \left[ \frac{|N_{xy}| + 1}{d_y} + \frac{1}{d_x} - \frac{d_x + 1}{2d_x} \right]\\
&= 2(1 - \alpha) \left[ \frac{|N_{xy}| + 1}{d_y} + \frac{1}{2d_x} - \frac{1}{2} \right]\\
&\geq 2(1 - \alpha) \left[ \frac{2|N_{xy}| + 2}{2d_y} + \frac{1}{2d_y} - \frac{1}{2} \right]\\
&= (1 - \alpha) \left[ \frac{2|N_{xy}| + 3}{d_y} - 1 \right].
\end{align*}
According to Theorem \ref{relation}, the equality $\kappa_\alpha(x, y) = (1-\alpha)\kappa_{\mathrm{LLY}}(x, y)$ holds. Hence, 
$$
\kappa_{\mathrm{LLY}}(x, y) \geq \frac{2|N_{xy}| + 3-d_y}{d_y}.
$$
We next prove that the right-hand side of the inequality is strictly positive.

Since $d_x,d_y\ge \delta(G)$, we have 
$$
|N_{xy}|\geq d_x+d_y-n\geq 2\delta(G)-n\ge 2\kappa'(G)-n.
$$
Furthermore, 
$$
d_y\leq n-d_x+|N_{xy}|\le n-\kappa'(G)+|N_{xy}|.
$$
Putting the inequalities together yields
\begin{align*}
2|N_{xy}|+3-d_y&\geq |N_{xy}|+\kappa'(G)+3-n\\
&\geq(2\kappa'(G)-n)+\kappa'(G)+3-n\\
&=3\kappa'(G)+3-2n.
\end{align*}

When $\kappa'(G)>2n/3-1$, then $2|N_{xy}|+3-d_y\geq3\kappa'(G)+3-2n>0$, yielding $\kappa_{\mathrm{LLY}}(x,y)>0$.

When $\kappa'(G)=2n/3-1$, since $\kappa'(G)$ is an integer, we may assume that 
$$
n=3r,\qquad\kappa'(G)=2r-1.
$$
Suppose that equality $2|N_{xy}|+3-d_y=3\kappa'(G)+3-2n$ holds, so that $2|N_{xy}|+3-d_y=0$ (If not, $2|N_{xy}|+3-d_y>0$ yields $\kappa_{\mathrm{LLY}}(x,y)>0$).
Then every inequality above is forced to be equality, we have $|N_{xy}|=2\kappa'(G)-n$ and $d_y=n-\kappa'(G)+|N_{xy}|.$ Hence, 
$$
d_x=d_y=\kappa'(G)=2r-1,\qquad|N_{xy}|=r-2.
$$
Define $N_x:=S_1(x)\setminus B_1(y)$ and $N_y:=S_1(y)\setminus B_1(x)$,
then $|N_x|=|N_y|=r$.
We claim that there must exist an edge between $N_x$ and $N_y$.
Indeed, suppose not. Choose $u\in N_x$, since 
$$|\{x,y\}|+|N_X|+|N_Y|+|N_{xy}|=2+r+r+r-2=3r=n,$$ 
then all neighbors of $u$ lie in $\{x\}\cup(N_x\setminus\{u\})\cup N_{xy}$, and therefore
$$
d_u\leq1+(r-1)+(r-2)=2r-2<2r-1=\delta(G),
$$
contradicting the definition of $\delta(G)$.
Hence there exists $uv\in E(G)$ with $u\in N_x$ and $v\in N_y$.
Therefore
$$\min_{\phi: N_x \to N_y} \sum_{v \in N_x} d(v, \phi(v))\leq1+2(r-1)=2r-1.$$
Since $d_x=d_y=2r-1$, according to Theorem \ref{equal degree}, 
$$\kappa_{\mathrm{LLY}}(x,y)\ge\frac1{2r-1}\Bigl((2r-1)+1-(2r-1)\Bigr)=\frac1{2r-1}>0.$$

Since the edge $xy$ was chosen arbitrarily, every edge has positive curvature. We complete the proof of Theorem \ref{n bound}.
\end{proof}

Next, we discuss the sharpness of the lower bound.


\begin{proof}[\bf{Proof of Theorem \ref{n bound sharpness}}]
Let $l\geq2$ and define the vertex set
$$
V =\{x,y,v\}\sqcup Z \sqcup X \sqcup Y = \{x, y, v, z_1, \dots, z_{l-2}, x_1, \dots, x_{l+1}, y_1, \dots, y_{l+1}\},
$$
where $|Z|=l-2$, $|X|=|Y|=l+1$.
The graph is defined as follows:
\begin{itemize}
\item $xy\in E$;
\item $x$ is adjacent to all vertices of $Z\cup X$;
\item $y$ is adjacent to all vertices of $Z\cup Y$;
\item every vertex of $Z$ is adjacent to all vertices of $X\cup Y$;
\item $X$ induces a clique, $Y$ induces a clique;
\item $v$ is adjacent to all vertices of $X\cup Y$.
\end{itemize}
Note that there are no vertices $z_i$ in the case where $l = 2$. The vertices $x$ and $y$ are of degree $2l$ and the vertex $v$ is of degree $2l+2$.
Additionally, we have
\begin{align*}
d_{x_i} &= 1 + (l - 2) + l  + 1 = 2l,\quad \forall i \in \{1, \dots, l+1\};\\
d_{y_i} &= 1 + (l - 2) + l  + 1 = 2l,\quad \forall i \in \{1, \dots, l+1\};\\
d_{z_i} &= 2 + 2(l+1) = 2l + 4,\quad \forall i \in \{1, \dots, l-2\}.
\end{align*}
Therefore, the resulting graph satisfies $\delta(G) = 2l$, since $n = 3l + 3$ we have
$$
\delta(G) = \frac{2n}{3} - 2.
$$

Observe that
$$N_x:=S_1(x) \setminus B_1(y) = \{x_1, \dots, x_{l+1}\},\
N_y:=S_1(y) \setminus B_1(x) = \{y_1, \dots, y_{l+1}\}.$$
Since $d_x = d_y$, we can apply Theorem \ref{equal degree}. By construction, the equality $d(x_i, y_j) = 2$ holds for every $i, j \in \{1, \dots, l+1\}$. Hence, $d(z, \phi(z)) = 2$ for every $z \in S_1(x) \setminus B_1(y)$ and every bijection $\phi : S_1(x) \setminus B_1(y) \to S_1(y) \setminus B_1(x)$. Thus, we conclude
\begin{align*}
\kappa_{\mathrm{LLY}}(x, y) &= \frac{1}{d_x} \left( d_x + 1 - \inf_{\phi} \sum_{z \in S_1(x) \setminus B_1(y)} d(z, \phi(z)) \right)\\
&= \frac{1}{d_x} \left( d_x + 1 - 2|S_1(x) \setminus B_1(y)| \right)\\
&= \frac{1}{2l} \left( 2l + 1 - 2 \left( l + 1 \right) \right)\\
&=  -\frac{1}{2l}.
\end{align*}

Note that $\delta(G)=2l\geq(3l+2)/2=(n-1)/2.$
Applying Lemma \ref{kappadelta} to the above construction gives
$\kappa'(G)=\delta(G)=2l=2n/3-2.$
Therefore, there exist infinitely many graphs with $\kappa'(G)=2n/3-2$ that contain an edge $xy$ with $\kappa_{\mathrm{LLY}}(x, y) < 0$.
We complete the proof of Theorem \ref{n bound sharpness}.
\end{proof}

Combining Lemma \ref{kappadelta} with Theorem \ref{n bound} yields the following corollary, which can be considered a strong extension of Theorem \ref{Hehlmin}.

\begin{cor}
Let $G$ be a connected graph with $n$ vertices and minimum degree $\delta(G)$. If$$\delta(G)\geq \frac{2n}{3}-1,$$ then $\kappa_{\mathrm{LLY}}(x,y)>0$ for all edges $xy$ in $G$.    
\end{cor}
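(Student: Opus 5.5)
The corollary follows in two short steps: first transfer the degree hypothesis to edge-connectivity via Lemma \ref{kappadelta}, then invoke Theorem \ref{n bound}.

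\textbf{Step 1 (the degree hypothesis puts us in the range of Lemma \ref{kappadelta}).} Assume $\delta(G)\geq \frac{2n}{3}-1$. We check that $\delta(G)\geq\frac{n-1}{2}$. This is equivalent to
$$\frac{2n}{3}-1\ge \frac{n-1}{2},$$
which rearranges to $4n-6\ge 3n-3$, i.e.\ $n\ge 3$. So for $n\ge 3$ Lemma \ref{kappadelta} applies and gives $\kappa'(G)=\delta(G)$.

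\textbf{Step 2 (apply Theorem \ref{n bound}).} By Step 1, $\kappa'(G)=\delta(G)\ge \frac{2n}{3}-1$. Theorem \ref{n bound} then yields $\kappa_{\mathrm{LLY}}(x,y)>0$ for every edge $xy$. One could sidestep Step 1 by rerunning the proof of Theorem \ref{n bound} with $\delta(G)$ in place of $\kappa'(G)$, but the reduction above is cleaner.

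\textbf{Small cases ($n\le 2$).} Here Lemma \ref{kappadelta} is not available, so we argue directly. For $n=2$, the graph is $K_2$; both vertices have degree $1$ and share all mass optimally, and its curvature is $2>0$. For $n=1$ there are no edges, so the statement is vacuous. Note also that for $n=2$ the hypothesis $\kappa'(G)=1\ge \frac{1}{3}$ holds, so this case is in fact already covered by Theorem \ref{n bound}.

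The only point needing care is the small-$n$ check; the rest is a direct chaining of Lemma \ref{kappadelta} with Theorem \ref{n bound}.
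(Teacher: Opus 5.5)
Your proof is correct and is exactly the paper's argument: the paper obtains the corollary by combining Lemma~\ref{kappadelta} (which gives $\kappa'(G)=\delta(G)$ once $\delta(G)\ge\frac{n-1}{2}$) with Theorem~\ref{n bound}. Your check that $\frac{2n}{3}-1\ge\frac{n-1}{2}$ for $n\ge 3$ and your treatment of $n\le 2$ spell out what the paper leaves implicit. In fact, for $n=2$ the lemma still applies, since $G=K_2$ has $\delta(G)=1\ge\frac12$.
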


We also derive a sufficient condition, depending only on the number of edges, for positive Lin--Lu--Yau curvature.


\begin{proof}[\bf{Proof of Theorem \ref{m bound}}]
Let $\kappa:=\kappa'(G)$, since $\kappa\leq \delta(G)$, every vertex has degree at least $\kappa$.
Hence $2m=\sum_{v\in V(G)} d_v \geq n\kappa$,
therefore $n\leq 2m/\kappa$.
By the condition $\kappa\geq 2n/3-1$ of Theorem \ref{n bound}, it suffices to prove that
$$\kappa\geq \frac{4m}{3\kappa}-1.$$
After rearranging, $\kappa^2+\kappa-4m/3\geq 0.$
The positive root of $x^2+x-4m/3=0$ is
$$x=\frac{\sqrt{1+\frac{16}{3}m}-1}{2}.$$
Hence condition
$$
\kappa\geq \left\lceil
\frac{\sqrt{1+\frac{16}{3}m}-1}{2}
\right\rceil
$$
implies $\kappa^2+\kappa-4m/3\geq 0$, and therefore $\kappa\geq 2n/3-1$.
Applying Theorem\ref{n bound} yields $\kappa_{\mathrm{LLY}}(x,y)>0$
for all edges $xy$ in $G$. We complete the proof of Theorem \ref{m bound}. 
\end{proof}

\begin{remark}
Asymptotically, the condition of
Theorem \ref{m bound} can be written as
$$
\kappa'(G)\geq \frac{2}{\sqrt3}\sqrt m + O(1).
$$
Thus an edge-connectivity of order $\sqrt m$
already forces positive Lin--Lu--Yau curvature.
\end{remark}

Next, we discuss the sharpness of the lower bound.


\begin{proof}[\bf{Proof of Theorem \ref{m bound sharpness}}]
Let $t\geq0$ and define the vertex set
$$
V =\{x,y,v\}\sqcup A \sqcup X \sqcup Y = \{x, y, v, a_1, \dots, a_t, x_1, \dots, x_{t+2}, y_1, \dots, y_{t+2}\},
$$
where $|A|=t$, $|X|=|Y|=t+2$.
The graph is defined as follows:
\begin{itemize}
\item $xy\in E$;
\item $x$ is adjacent to all vertices of $A\cup X$;
\item $y$ is adjacent to all vertices of $A\cup Y$;
\item every vertex of $A$ is adjacent to all vertices of $X\cup Y$;
\item $X$ induces a clique, $Y$ induces a clique;
\item $v$ is adjacent to all vertices of $X\cup Y$.
\end{itemize}
Note that there are no vertices $a_i$ in the case where $t = 0$. The vertices $x$ and $y$ are of degree $2t+3$ and the vertex $v$ is of degree $2t+4$.
Additionally, we have
\begin{align*}
d_{x_i} &= 1 + t + (t+1) + 1= 2t+3,\quad \forall i \in \{1, \dots, t+2\};\\
d_{y_i} &= 1 + t + (t+1) + 1= 2t+3,\quad \forall i \in \{1, \dots, t+2\};\\
d_{a_i} &= 2 + 2(t+2)= 2t + 6,\quad \forall i \in \{1, \dots, t\}.
\end{align*}
Therefore, the resulting graph satisfies $\delta(G) = 2t+3$, since $n = 3t + 7$ we have
$$
\delta(G) =2t+3\geq \frac{3t+6}{2} =\frac{n-1}{2}.
$$
By Lemma \ref{kappadelta}, we have
$$\kappa'(G)=\delta(G)=2t+3.$$

Observe that
$$
N_x:=S_1(x) \setminus B_1(y) = \{x_1, \dots, x_{t+2}\},\
N_y:=S_1(y) \setminus B_1(x) = \{y_1, \dots, y_{t+2}\}.
$$
Since $d_x = d_y$, we can apply Theorem \ref{equal degree}. By construction, the equality $d(x_i, y_j) = 2$ holds for every $i, j \in \{1, \dots, l+1\}$. Hence, $d(z, \phi(z)) = 2$ for every $z \in S_1(x) \setminus B_1(y)$ and every bijection $\phi : S_1(x) \setminus B_1(y) \to S_1(y) \setminus B_1(x)$. Thus, we conclude
\begin{align*}
\kappa_{\mathrm{LLY}}(x, y) &= \frac{1}{d_x} \left( d_x + 1 - \inf_{\phi} \sum_{z \in S_1(x) \setminus B_1(y)} d(z, \phi(z)) \right)\\
&= \frac{1}{d_x} \left( d_x + 1 - 2|S_1(x) \setminus B_1(y)| \right)\\
&= \frac{1}{2t + 3} \left( 2t + 3+1 - 2 (t+2) \right)\\
&= 0.
\end{align*}
As for the number of edges, we have
\begin{align*}
m &= 1 + 2(2t+2) + 2t(t+2) + 2\binom{t+2}{2} + 2(t+2)\\
&= 1 + 4t+4 + 2t^2+4t + t^2+3t+2 + 2t+4\\
&=3t^2+13t+11.
\end{align*}
Thus, we conclude
$$\kappa'(G)=2t+3=(2t+4)-1
=\left\lceil\frac{\sqrt{1+\frac{16}{3}m}-1}{2}\right\rceil-1.$$ 

Therefore there exist infinitely many graphs with $\kappa'(G)=\left\lceil\frac{\sqrt{1+\frac{16}{3}m}-1}{2}\right\rceil-1$ that contain an edge $xy$ with $\kappa_{\mathrm{LLY}}(x, y) = 0$.
We complete the proof of Theorem \ref{m bound sharpness}.
\end{proof}


\end{document}